\theoremstyle{definition}
\newtheorem{definition}{Definition}
\newtheorem{theorem}{Theorem}
\newtheorem{proposition}{Proposition}
\newtheorem{lemma}{Lemma}
\newtheorem{remark}{Remark}
\newtheorem{assumption}[theorem]{Assumption}
\newtheorem*{mainTheorem}{ Main Theorem}
\newtheorem{examples}[theorem]{ Examples}
\newcommand{\besov}{B^{-d/2-\beta}_{\infty,\infty}}
\newcommand{\Rd}{\mathbb{R}^d}
\newcommand{\Zd}{\mathbb{Z}^d}
\newcommand{\M}{\mathbb{M}}
\numberwithin{equation}{section}
\title{Total variation multiscale estimators for linear inverse problems}
\author[1]{Miguel~del~\'Alamo}
\author[1,2]{Axel~Munk}
\affil[1]{Institute for Mathematical Stochastics, University of G\"ottingen \\ Goldschmidtstrasse 7, 37077 G\"ottingen, Germany}
\affil[2]{Max Planck Institute for Biophysical Chemistry, Am Fassberg 11, 37077 G\"ottingen, Germany}
\date{\today}
\begin{document}

\maketitle

\begin{abstract}
{Even though the statistical theory of linear inverse problems is a well-studied topic, certain relevant cases remain open. Among these is the estimation of functions of bounded variation ($BV$), meaning $L^1$ functions on a $d$-dimensional domain whose weak first derivatives are finite Radon measures. The estimation of $BV$ functions is relevant in many applications, since it involves minimal smoothness assumptions and gives simplified, interpretable cartoonized reconstructions. In this paper we propose a novel technique for estimating $BV$ functions in an inverse problem setting, and provide theoretical guaranties by showing that the proposed estimator is minimax optimal up to logarithms with respect to the $L^q$-risk, for any $q\in[1,\infty)$. This is to the best of our knowledge the first convergence result for $BV$ functions in inverse problems in dimension $d\geq 2$, and it extends the results of~\cite{donoho1995nonlinear} in $d=1$. Furthermore, our analysis unravels a novel regime for large $q$ in which the minimax rate is slower than $n^{-1/(d+2\beta+2)}$, where $\beta$ is the degree of ill-posedness: our analysis shows that this slower rate arises from the low smoothness of $BV$ functions. The proposed estimator combines variational regularization techniques with the wavelet-vaguelette decomposition of operators. }
\end{abstract}

\begin{center}
\textbf{Keywords } Inverse problems \ \  Minimax estimation \ \ Total variation \ \ Interpolation inequalities \ \ Wavelet-vaguelette
\end{center}

\begin{center}
\textbf{Mathematics Subject Classification (2010) } 62G05 \ \ 65J22  \ \ 62G20
\end{center}

\tableofcontents

\section{Introduction}

We consider the problem of estimating a real-valued function $f$ from observations of $Tf$ in a white noise regression model (see e.g.~\cite{Tsybakov})
\begin{equation}
dY(x)=Tf(x)\, dx+\frac{\sigma}{\sqrt{n}}\, dW(x), \ \ \ x\in\mathbb{M}.\label{results:MODEL}
\end{equation}
Here $\mathbb{M}$ denotes an open subset of $\Rd$, $T:\, L^2(\Rd)\rightarrow L^2(\M)$ is a linear, bounded operator, and $dW$ denotes a Gaussian white noise process on $L^2(\M)$ (see Section 2.1.2 in~\cite{gine2015mathematical}). The domain $\mathbb{M}$ on which the data $Y$ is defined is given by the inverse problem under consideration. In case of regression or deconvolution, we may have $\mathbb{M}=\Rd$, while for certain types of tomography we have $\mathbb{M}= \mathbb{R}\times S^{d-1}$~\citep{natterer1986mathematics}, where $S^{d-1}$ denotes the $d$-dimensional unit sphere.
The parameter $\sigma\, n^{-1/2}>0$ serves as a noise level, and we may assume it to be known, since otherwise it can be estimated efficiently (see e.g.~\cite{spokoiny2002variance} or~\cite{munk2005difference}). 
The parametrization $\sigma\, n^{-1/2}$ is motivated by the fact that the white noise model~\eqref{results:MODEL} is an idealization of a nonparametric regression model with $n$ design points and independent normal noise with variance $\sigma^2$ (see e.g.~\cite{brown1996},~\cite{reiss2008} or Section 1.10 in~\cite{Tsybakov}). 
Specifically, the white noise model does not take into account discretization effects, thus simplifying the theoretical analysis (see however Section~\ref{Rem_Discr} for a discussion of this). In the following we will often refer to $n$ as the \textit{sample size}, keeping in mind that this is only an analogy. 

In this setting, our goal is to reconstruct the function $f$ from observations $dY$ and quantify the error made as $n$ grows.
In order to do so, we assume that $f$ is supported inside the unit hypercube $[0,1]^d$. This restriction is somewhat arbitrary: we merely need the support of $f$ to be contained in a compact set. 
Additionally, we make the structural assumption that $f$ is a function of bounded variation, written $f\in BV$. 
\begin{definition}[Functions of bounded variation]\label{def:BVfunc}
The space of functions of bounded variation $BV$ consists of functions $g\in L^1$ whose weak distributional gradient $\nabla g=(\partial_{x_1}g,\cdots,\partial_{x_d}g)$ is a $\mathbb{R}^d$-valued finite Radon measure on $\Rd$. 
The finiteness implies that the bounded variation seminorm of $g$, defined by
\begin{equation*}
 |g|_{BV}:=\sup\bigg\{\int_{\mathbb{R}^d}g(x)\, div(h)(x)\, dx\, \bigg|\, h\in C^1(\mathbb{R}^d;\mathbb{R}^d),\ \|h\|_{L^{\infty}}\leq 1\bigg\},
\end{equation*}
is finite, where $div(h)=\sum_{i=1}^d\partial_{x_i}h_i(x)$ denotes the divergence of the vector field $h=(h_1,\ldots,h_d)$. $BV$ is a Banach space with the norm 
$\|g\|_{BV}:=\|g\|_{L^1}+|g|_{BV}$ (see~\cite{Evans}). Here $C^1(\mathbb{R}^d;\mathbb{R}^d)$ denotes the set of continuously differentiable functions on $\Rd$ taking values on $\Rd$.
\end{definition}

Functions of bounded variation have been used manifold in imaging applications since their introduction in the seminal work by~\cite{ROF}. The reason for their success is that they produce cartoonized reconstructions with sharp edges, which eases interpretability and makes them suitable for applications as diverse as medical imaging, microscopy, astronomy and geology, to mention just a few (see~\cite{scherzer2009variational} and references therein). 
However, in spite of their widespread use, a statistical theory for the estimation of $BV$ functions in inverse problems is still lacking.
 To the best of our knowledge, the only available result for minimax optimal reconstructions of $BV$ functions in inverse problems is~\cite{donoho1995nonlinear}. He introduced the wavelet-vaguelette decomposition (WVD) associated with an operator, and showed that thresholding the WVD yields minimax optimal reconstruction over a range of Besov spaces. His results cover the case of $BV$ functions for $d=1$ and $\beta$-smoothing operators with $\beta\in[0,1/2)$, meaning operators whose singular values behave like $\kappa_j=O(2^{-j\beta})$ as $j\rightarrow\infty$. This includes convolution operators with smooth enough convolution kernels, among others. In contrast, there is no statistical guaranty for estimating $BV$ functions in dimension $d\geq 2$, which covers the very relevant imaging applications.

In this paper we propose an estimator that combines variational regularization with the WVD and multiscale dictionaries. We show that the proposed estimators are minimax optimal up to logarithmic factors for estimating $BV$ functions in any dimension for a variety of inverse problems, including Radon inversion and deconvolution.

\subsection{Multiscale total variation estimation}

We consider the variational estimator
\begin{equation}
\hat{f}_n\in\underset{g\in \mathcal{F}_n}{\textup{ argmin }}|g|_{BV}\ \textup{ subject to } \ \max_{\omega\in\Omega_n}\big|\langle u_{\omega},Tg\rangle-\langle u_{\omega},dY\rangle\big|\leq\gamma_n,\label{result:EST}
\end{equation}
where $\gamma_n$ is a threshold to be chosen, and we minimize over a set of functions $\mathcal{F}_n$ to be specified later. $\Omega_n$ is a finite set of indices, and $\{u_{\omega}\}$ is a vaguelette system associated to the operator $T$,  meaning that
\begin{equation*}
T^*u_{\omega}=\kappa_{\omega}\, \psi_{\omega}\ \ \forall \omega\in\Omega
\end{equation*}
for a wavelet basis $\{\psi_{\omega}\, |\, \omega\in\Omega\}$ and generalized singular values $\kappa_{\omega}$ (see Assumption~\ref{ass:OP} for the details). 
The set $\Omega_n$ depends monotonically on the parameter $n$ in~\eqref{results:MODEL}, which plays the role of the sample size: the larger $n$, the larger the set $\Omega_n$. The reason is that, if the observations $dY$ are very noisy ($n$ small), we do not want to include too many terms in~\eqref{result:EST}, since $\hat{f}_n$ would then be dominated by the noise. Conversely, the smaller the noise level, the more observations we want to include in the data-fidelity in~\eqref{result:EST}, which is then able to extract more information about $f$.

Notice that, by the definition of the vaguelettes, the data-fidelity in~\eqref{result:EST} is actually a constraint on the wavelet coefficients of $g$: they are forced to be close to the wavelet coefficients of the unknown function $f$, up to noise terms. Hence, the data-fidelity in~\eqref{result:EST} amounts to denoising of wavelet coefficients, while the regularization term $|g|_{BV}$ ensures that $\hat{f}_n$ is well-behaved in the $BV$ seminorm.

We deliberately pose the optimization problem~\eqref{result:EST} in constrained form, but emphasize its equivalence to the penalized form
\begin{equation}
\hat{f}\in\underset{g\in \mathcal{F}_n}{\textup{argmin}}\ |g|_{BV}+\lambda\, \max_{\omega\in\Omega_n}\big|\langle u_{\omega},Tg\rangle-\langle u_{\omega},dY\rangle\big|\label{intro:variational}.
\end{equation}
Indeed, both forms are equivalent for suitable parameters $\gamma_n$ and $\lambda$, but these will depend on the data and cannot be transformed easily from one problem to the other. For the penalized formulation~\eqref{intro:variational}, the optimal $\lambda$ could then be chosen in a data-driven way (e.g.~by cross validation~\citep{wahba1977practical} or by a version of Lepskii's balancing principle~\citep{lepskii1991problem}, see e.g.~\cite{mathe2003geometry} in the context of inverse problems). In the constrained formulation~\eqref{result:EST}, the optimal $\gamma_n$ in~\eqref{result:EST} can be chosen in a universal, non data-dependent manner, see equation~\eqref{IP:UnivGamma}. 

To see that, notice that the role of $\gamma_n$ is to decide which functions are allowed for the minimization problem~\eqref{result:EST}: a smaller $\gamma_n$ would yield very few admissible functions, and conversely for larger $\gamma_n$. Since the best reconstruction we can hope for is the true regression function $f$, the optimal $\gamma_n$ would be the one that is large enough to let $f$ be a feasible function, but not larger. In this sense, note that $f$ satisfies the constraint in~\eqref{result:EST} precisely when
\begin{equation}
\max_{\omega\in\Omega_n}\big|\langle u_{\omega},Tf\rangle-\langle u_{\omega},dY\rangle\big|=\max_{\omega\in\Omega_n}\frac{\sigma}{\sqrt{n}}\big|\langle u_{\omega},dW\rangle\big|\leq\gamma_n.\label{intro:QuantileArg}
\end{equation}
Assume for a moment that $u_{\omega}\in L^2$ with $\|u_{\omega}\|_{L^2}=1$ for all $\omega$. Then the left-hand side behaves like the maximum of the absolute value of $\#\Omega_n$ standard normal random variables times $\sigma\, n^{-1/2}$. Consequently, we see that~\eqref{intro:QuantileArg} holds asymptotically with probability one if we choose $\gamma_n\sim\sigma\, n^{-1/2}\, \sqrt{2\log \#\Omega_n}$. This argument can be adapted to the case that the $u_{\omega}$ do not have norm one, as long as their norms remain bounded above and below by positive constants. We remark that this canonical choice of $\gamma_n$ makes the estimator in constrained form~\eqref{result:EST} more convenient from a practical point of view than the one in penalized form~\eqref{intro:variational}.

At this point we can argue why the choice of the data-fidelity term in~\eqref{result:EST} is in a sense optimal: if we had chosen it to be the maximum of \textit{weighted} coefficients, these weights would appear in~\eqref{intro:QuantileArg}, which would then be the maximum of normals with \textit{different} variances. The maximum would hence be dominated by the terms with larger variances, which would lead to overfitting (if small scales dominate) or oversmoothing (if the large scales dominate).

Finally, we argue that the multiscale data-fidelity in~\eqref{result:EST} is in a sense preferable over the $L^2$ data-fidelity, which acts globally on the residuals. Indeed, consider an estimator like~\eqref{result:EST} with an $L^2$ constraint, which would take the form
\begin{equation*}
\sum_{\omega\in\Omega_n}|\langle u_{\omega},Tg\rangle-\langle u_{\omega},dY\rangle|^2\leq \widetilde{\gamma_n}
\end{equation*}
for some $\widetilde{\gamma_n}$, where we used the fact that $\{u_{\omega}\}$ is a frame for $L^2$~\citep{donoho1995nonlinear} to express the $L^2$ norm in terms of the vaguelette coefficients. 
Arguing as above, the optimal $\widetilde{\gamma_n}$ is the one for which the true function $f$ satisfies the constraint. 
Plugging in $g=f$, the left-hand side is a $\chi^2$-distributed random variable, so $\widetilde{\gamma_n}$ should be chosen as $\widetilde{\gamma_n}\sim \sigma^2\, \#\Omega_n/n$. 
The difference between the multiscale and $L^2$ constraints is now apparent: 
\begin{align*}
\textup{ multiscale constraint: } &\ell^{\infty} \textup{ ball of radius } \ \sigma\, n^{-1/2}\sqrt{2\log \#\Omega_n},
\\
L^2 \textup{ constraint: } &\ell^{2}\  \textup{ ball of radius } \ \sigma\, n^{-1/2}\sqrt{\#\Omega_n},
\end{align*}
where both constraints are on the vaguelette domain. 
If we assume that the number of constraints $\#\Omega_n$ grows polynomially in $n$ (see Assumption~\ref{ass:OP}), then the radius in the multiscale constraint tends to zero as $n\rightarrow \infty$, while the radius in the $L^2$ constraint tends to a constant or diverges if $n=O(\#\Omega_n)$. 
Hence, the multiscale constraint set is much smaller for $n$ large, and we expect the multiscale data-fidelity to produce more faithful reconstructions.

 Before we turn to the discussion of the convergence properties of $\hat{f}_n$, let us discuss two potential limitations of our approach. First, not every operator $T$ has an associated vaguelette system $\{u_{\omega}\}$, as we use in~\eqref{result:EST}. In fact, only reasonably homogeneous operators admit such a system (see~\cite{donoho1995nonlinear}). However, for our theory we do not need the whole generality of the WVD (see Assumption~\ref{ass:OP}), and many relevant operators such as the Radon transform, convolution or integration satisfy our assumptions (see Examples~\ref{result:Examples} below). 

The second limitation concerns the numerical solution of the optimization problem in~\eqref{result:EST}, which in general is a non-smooth, high-dimensional optimization problem (since $n$ and $\#\Omega_n$ might be large). While classical techniques such as interior point methods~\citep{nesterov1994interior} find their limitations here, the computation of~\eqref{results:MODEL} is meanwhile feasible due to recent progress in convex optimization, e.g.~in primal-dual methods~\citep{chambolle} and accelerations thereof~\citep{malitsky2018first}, or semismooth Newton methods with the path-following technique~\citep{clason}. We will not elaborate on this issue further and postpone this to future work.

\subsection{Main result}

The main result of this paper states that the estimator~\eqref{result:EST} is minimax optimal (up to logarithmic factors) for estimating $BV$ functions in any dimension for certain inverse problems. In order to formulate our result we need to introduce some notation. 
For $L>0$ define the intersection of a $BV$-ball of radius $L$ with an $L^{\infty}$-ball as
\begin{equation}
BV_L:=\big\{g\in BV\cap \mathcal{D}(T)\, \big|\, |g|_{BV}\leq L, \ \ \|g\|_{L^{\infty}}\leq L,\ \ \textup{supp }g\subseteq [0,1]^d\big\},\label{setBVL}
\end{equation}
where $\mathcal{D}(T)\subset L^2$ denotes the domain of the operator $T$. The reason for the support condition in~\eqref{setBVL} is the following: since we only have a finite amount of information, we cannot hope to recover a function with infinite support. The restriction
to the unit cube is in a sense arbitrary: any regular enough compact set would do.

For given $d$, $\beta\geq 0$ and $q\in[1,\infty]$, define the number
\begin{equation}
\vartheta_{q,\beta}:=\begin{cases}
\frac{1}{d+2\beta+2} & \textup{ for } q\leq 1+2/(d+2\beta)
\\
\frac{1}{q \, (d+2\beta)} & \textup{ for } q> 1+2/(d+2\beta).
\end{cases}\label{results:vartheta2}
\end{equation}

Our main result (Theorems~\ref{results:Main_thm} and~\ref{results:Thm2}) can be stated informally as follows.

\begin{mainTheorem}[Informal]
For $d\in\mathbb{N}$ and $\beta\geq 0$, let $T$ have a WVD with singular values behaving as $\kappa_j=2^{-j\beta}$ (see Assumption~\ref{ass:OP} in Section~\ref{sect:results}). Let the threshold $\gamma_n$ be as in~\eqref{IP:UnivGamma} for $\kappa>\kappa^*$ depending on $T$ and $d$ only. Then the estimator $\hat{f}_n$ attains the \textit{minimax optimal} rate of convergence over $BV_L$ up to a logarithmic factor,
\begin{equation}
     \sup_{f\in BV_L}\mathbb{E}\big[\|\hat{f}_n-f\|_{L^q}\big]\leq C_L\, n^{-\vartheta_{q,\beta}}\, (\log n)^{3-\min\{2,d\}}\label{Intro:Conver1}
\end{equation}
for $n$ large enough, for any $q\in\big[1,\infty\big)$, any $L>0$ and a constant $C_L>0$ independent of $n$, but dependent on $L$, $\sigma$, $d$ and $T$. 
\end{mainTheorem}

The convergence rate in~\eqref{Intro:Conver1} is indeed minimax optimal over the class $BV_L$ up to the logarithmic factor, as it is the optimal rate over the smaller class of bounded Besov $B^1_{1,1}$ functions, see Theorem~\ref{results:Thm2} and Section~\ref{sect:Notation} for the definition of Besov spaces. The minimax rate $n^{-1/(d+2\beta+2)}$ is well-known for inverse problems when $q\leq 1+2/(d+2\beta)$ (see e.g.~\cite{cavalier2011inverse}). In contrast, the "slow" regime with rate $n^{-\frac{1}{q(d+2\beta)}}$ for $q>1+2/(d+2\beta)$ has been observed for the specific case $\beta=0$ in density estimation~\citep{goldenshluger} and nonparametric regression (\cite{lepskii2015} and~\cite{mTV}) when estimating over anisotropic Nikolskii classes $\mathbb{N}^s_p$ and Besov classes $B^s_{p,t}$ with $s<d/p$. Moreover, the slow regime explains the recently observed phase transition in the $L^2$ minimax risk for estimating discretized $TV$ functions in the particular case $\beta=0$, see~\cite{sadhanala2016total}. Our result extends these findings to linear inverse problems. 

The proof of the minimax optimality of that rate is based on the construction of a set of alternatives in the smaller space $B^1_{1,1}\subset BV$. Interestingly, the set of alternatives that attains the minimax rate is neither sparse nor dense: it presents blocks of signals at different locations. We conjecture that only estimators that incorporate a form of spatial adaptation can be minimax optimal in this regime, as the ones proposed in~\cite{lepskii2015}, in~\cite{mTV} and in the present paper.

The proof of the Main Theorem is based on an upper bound on the $L^q$-risk with an interpolation inequality in terms of the $BV$ norm and a Besov norm of negative smoothness,
\begin{equation}
   \|\hat{f}_n-f\|_{L^q}\leq C \|\hat{f}_n-f\|_{B^{-d/2-\beta}_{\infty,\infty}}^{\frac{2}{d+2\beta+2}}\|\hat{f}_n-f\|_{BV}^{\frac{d+2\beta}{d+2\beta+2}}\hspace{0.5cm} \forall g\in\besov\cap BV\label{Intro:DummyInt}
\end{equation}
for any $q\in\big[1,\frac{d+2\beta+2}{d+2\beta}\big]$, $d\geq 2$. See Section~\ref{sect:Notation} for the definition of Besov spaces. This inequality follows from a result by~\cite{cohen2003harmonic}, proved by an analysis of the wavelet coefficients of $BV$ functions. Since we have $\hat{f}_n\in BV$ by construction, the $BV$ norm in~\eqref{Intro:DummyInt} is easily bounded by a constant. On the other hand, the Besov norm can be related to the constraint in the right-hand side of~\eqref{result:EST}, and some analysis yields the bound
\begin{equation*}
\|\hat{f}_n-f\|_{B^{-d/2-\beta}_{\infty,\infty}}\leq C\,n^{-1/2}\, \log n
\end{equation*}
with high probability. Plugging this expression in~\eqref{Intro:DummyInt}, we get the desired bound for the $L^q$-risk. The bound is extended to $q>1+\frac{2}{d+2\beta}$ using H\"older's inequality. For $d=1$ we proceed analogously with some modifications. See Section~\ref{Sec:Main_Proof} for a complete proof.

\subsection{Related work}

Notwithstanding the success of $BV$ functions in imaging applications (see~\cite{ROF} for the first reference), there are very few works that analyze the estimation of $BV$ functions in a statistical setting. In nonparametric regression ($T=id$), classical results (\cite{mammen1997} and~\cite{donoho1998minimax}) established minimax optimality results for estimation in dimension $d=1$, and recently a class of multiscale variational estimators was shown to perform optimally in any dimension~\citep{mTV}, whose approach we generalize here to $T\neq id$. In statistical inverse problems, the only work proving minimax optimal convergence rates for the estimation of $BV$ is, to the best of our knowledge,~\cite{donoho1995nonlinear}. He shows that thresholding of the WVD is minimax optimal over a range of Besov spaces $B^{s}_{p,t}$ and for a class of $\beta$-smoothing inverse problems. In the case relevant for $BV$ ($s=p=1$), the minimax optimality holds for the range $\beta<1-d/2$, i.e.~for $\beta$ smoothing operators in dimension $d=1$ and $\beta\in[0,1/2)$. The present work is hence an improvement, since we do not impose any limitation on $\beta$ nor on the dimension $d$. On the other hand, we get a suboptimal logarithmic factor in~\eqref{Intro:Conver1}, while~\cite{donoho1995nonlinear} achieves the exact optimal rate.

At a technical level, our work is inspired by several sources. We have already mentioned~\cite{donoho1995nonlinear}, who introduced the WVD as a means for using wavelet methods in inverse problems (see also~\cite{abramovich1998wavelet} for a variant of the WVD, and~\cite{candes2002recovering} for a refined approach to Radon inversion). Besides these works, there have been several approaches that implicitly use the WVD idea. We refer to~\cite{schmidt2013multiscale} and~\cite{proksch2018multiscale} for hypothesis testing in inverse problems, where multiscale dictionaries adapted to the operator $T$ are employed. Another source of inspiration for our work are nonparametric methods that combine variational regularization with multiscale dictionaries. We refer exemplarily to~\cite{CandesGuo},~\cite{dong2011automated},~\cite{frick2012} and~\cite{frick2013statistical} for an empirical analysis of such methods in simulations. Moreover, the proof of our main result is based on the above mentioned interpolation technique: an interpolation inequality of the form~\eqref{Intro:DummyInt} is used to relate the risk functional, the regularization functional and the data-fidelity. This technique was used by~\cite{nemirovskii1985nonparametric} and~\cite{grasmair2015} 
for estimating Sobolev functions, using an extension of the Gagliardo-Nirenberg interpolation inequalities~\citep{nirenberg1959}, and by~\cite{mTV} for the estimation of $BV$ functions, employing a generalization thereof (\cite{Meyer},~\cite{cohen2003harmonic}). In that sense, the present work combines the tools developed in~\cite{mTV} with the WVD from~\cite{donoho1995nonlinear}, and it generalizes both results.

\subsection*{\em Organization of the paper}

The rest of the paper is organized as follows. In Section~\ref{sect:results} we state our assumptions and main theorems, and give their proofs. We also discuss the particular inverse problems of deconvolution and Radon inversion. The proofs of auxiliary results are given in Section~\ref{sect:Proofs}.

\section{Results}\label{sect:results}

\subsection{Notation}\label{sect:Notation}

\textit{Basic notation.} We denote the Euclidean norm of a vector $v=(v_1,\ldots,v_d)\in\Rd$ by $|v|:=\big(v_1^2+\cdots+v_d^2\big)^{1/2}$. For a real number $x$, define $\lfloor x\rfloor:=\textup{max}\big\{m\in\mathbb{Z}\, \big|\, m\leq x\big\}$ and $\lceil x\rceil:=\textup{min}\big\{m\in\mathbb{Z}\, \big|\, m>x\big\}$. The cardinality of a finite set $X$ is denoted by $\# X$. We say that two sequences $a_n$ and $b_n$, $n\in\mathbb{N}$, grow at the same rate, written $a_n\asymp b_n$, if there are (potentially zero) constants $c_1,c_2\geq 0$ such that $c_1a_n\leq b_n\leq c_2 a_n$ for all $n\in\mathbb{N}$. Finally, we denote by $C$ a generic positive constant that may change from line to line. 
\\

\textit{Wavelet bases.} Let $\{\psi_{j,k,e}\, |\, (j,k,e)\in\Lambda\}$ denotes a wavelet basis of $L^2(\Rd)$ formed by tensorization of Daubechies wavelets~\citep{daubechies1992ten} with $D$ continuous partial derivatives and whose mother wavelet has $R$ vanishing moments. Here $j\geq 0$ is a scale index, $k\in\Zd$ is a position index, and $e=(e_1,\ldots,e_d)\in\{0,1\}^d$ denotes whether $\psi_{j,k,e}$ is a mother or a father wavelet along each coordinate. 
We recall that one-dimensional Daubechies wavelets with $R$ vanishing moments have support of size $2R-1$ (with respect to the Lebesgue measure) and are $\lfloor 0.18 \cdot (R-1)\rfloor$ times continuously differentiable (see Theorem 4.2.10 in~\cite{gine2015mathematical}). 
A $D$-smooth wavelet basis formed by tensorization of one-dimensional Daubechies wavelets needs to satisfy $R=1+6D$ in order to have $\lfloor 0.18 \cdot 6\cdot D\rfloor>D$ continuous derivatives. Consequently, the mother and father wavelets have support of size $(12\,D+1)^d$.

In this work we will mainly deal with functions $g$ supported inside the unit cube, $\textup{supp } g\subseteq [0,1]^d$. We will use their wavelet expansion intensively, so let us introduce the set of wavelets with nonzero overlap with the unit cube
\begin{equation}
\Omega=\{(j,k,e)\in\Lambda\, |\, \textup{supp }\psi_{j,k,e}\cap (0,1)^d\neq\emptyset\}.\label{results:omega}
\end{equation}
For each $n\in\mathbb{N}$, $n\geq 2$, let
\begin{equation*}
\Omega_n:=\{(j,k,e)\in\Omega\, |\, j\leq \lceil d^{-1}\log n \rceil\}
\end{equation*}
denote the set of indices of wavelets at scales rougher that $\lceil d^{-1}\log n\rceil$. Since the wavelets at scale $j=0$ have support of size $(12\,D+1)^d$, it follows that there are $O(2^{(j+1)d})$ indices $(j,k,e)\in\Omega$ at level $j$, and hence the cardinality of $\Omega_n$ is of the order $\#\Omega_n\asymp 2^{d\lceil d^{-1}\log n\rceil}\asymp n$.
\\

\textit{Besov spaces.} Let $\{\psi_{j,k,e}\}$ be a wavelet basis with $D$ continuous partial derivatives and whose mother wavelet has $R$ vanishing moments. For $p,q\in[1,\infty]$ and $s\in\mathbb{R}$ with $\min\{R,D\}>|s|$, the Besov space $B^s_{p,q}(\Rd)$ consists of all functions (or distributions) $g$ with finite Besov norm
\begin{equation}
\|g\|_{B^{s}_{p,q}}:=\bigg(\sum_{j\geq 0} 2^{jq\big(s+\frac{d}{2}-\frac{d}{p} \big)}\bigg(\sum_{k\in\Zd}\sum_{e\in\{0,1\}^d}|\langle\psi_{j,k,e},g\rangle|^p\bigg)^{q/p}\bigg)^{1/q}.
\end{equation}
We refer to Section 4.3 in~\cite{gine2015mathematical} for more details.
\\

Finally, we define the Fourier transform of a function $g\in L^1(\Rd)$ by
\begin{equation*}
 \mathcal{F}[g](\xi):=\int_{\Rd}g(x)\, e^{-i \xi \cdot x}\, dx, \ \ \ \xi\in\Rd.
\end{equation*}
The Fourier transform can be extended as an operator to $L^2$ and, by duality, to 
distributions $\mathcal{D}^*(\Rd)$ (see e.g.~Section 4.1.1 in~\cite{gine2015mathematical}).

\subsection{Main results}\label{sect:Results}

We make the following assumptions on the operator $T$.

\begin{assumption}\label{ass:OP}
Let $T: L^2(\Rd)\rightarrow L^2(\M)$ denote a bounded, linear operator. 
For $\beta\geq 0$, assume that the following hold:
\begin{itemize}
\item there is a wavelet basis $\{\psi_{j,k,e}\, \big|\, (j,k,e)\in\Lambda\}$ of $L^2(\Rd)$ (see Section~\ref{sect:Notation}) with $D$ continuous partial derivatives and whose mother wavelet has $R$ vanishing moments, such that $\min\{R,D\}>\max\{1,d/2+\beta\}$;
\item there is a set of functions $\{u_{j,k,e}\, \big|\, (j,k,e)\in\Lambda\}\subset L^2(\M)$, which we call \textit{vaguelette system}, s.t.
\begin{align}
&T^* u_{j,k,e} = \kappa_j\, \psi_{j,k,e} \ \ \ \forall (j,k,e)\in\Lambda, \label{results:Ass1}
\end{align}
with singular values $\kappa_{j}=2^{-j\beta}$. Furthermore, the vaguelettes satisfy
\begin{align*}
&c_1\leq \|u_{\omega}\|_{L^2}\leq c_2 \ \ \ \forall \omega\in\Lambda
\end{align*}
for some real constants $0<c_1<c_2$.
\end{itemize} 
\end{assumption}

We remark that a vaguelette system as constructed in~\cite{donoho1995nonlinear} is a frame. However, we will not need that property in the following.

\begin{remark}\label{results:Remark1}
\quad
\begin{itemize}
\item[a)] Assumption~\ref{ass:OP} is slightly weaker than assuming that the operator $T$ has a wavelet-vaguelette decomposition (WVD)~\citep{donoho1995nonlinear}. In the following we nevertheless call $\{u_{j,k,e}\}$ a vaguelette system for simplicity.
\item[b)]  As remarked in Section~\ref{sect:Notation}, we will only need the wavelets with nonzero overlap with the unit cube, which we index by the set $\Omega$ in~\eqref{results:omega}. In the following we index the vaguelettes accordingly.
\item[c)] The condition $\min\{R,D\}>\max\{1,d/2+\beta\}$ is necessary for ensuring that the norms of the Besov spaces $B^{-d/2-\beta}_{\infty,\infty}$ and $B^1_{p,q}$, $p,q\in[1,\infty]$, can be expressed in terms of wavelet coefficients with respect to the basis $\{\psi_{j,k,e}\}$ (see Section~\ref{sect:Notation}, or Section 4.3 in~\cite{gine2015mathematical}).
\item[d)] Let $\{\psi_{j,k,e}\}$ be a smooth enough wavelet basis. Then condition~\eqref{results:Ass1} implies that the inverse problem~\eqref{results:MODEL} is mildly ill-posed with degree of ill-posedness $\beta$. 
\end{itemize}
\end{remark}

\begin{examples}\label{result:Examples}
We list here some examples of operators satisfying Assumption~\ref{ass:OP}.
\begin{itemize}
\item[a)] The integration operator
\begin{equation*}
Tg(x):=\int_{-\infty}^{x}g(y)\, dy, \ \ \ x\in\mathbb{R}.
\end{equation*}
Its domain consists of functions $g$ such that $|\xi|^{-1}\mathcal{F}[g](\xi)\in L^2(\mathbb{R})$, where $\mathcal{F}$ denotes the Fourier transform. The vaguelettes are given by derivatives and integrals of the wavelet basis, and the critical values are $\kappa_j=2^{-j}$. Fractional integration, iterated integration and higher dimensional integrals also define operators satisfying Assumption~\ref{ass:OP}. We refer to~\cite{donoho1995nonlinear} for more details.
\item[b)] The Radon transform, which maps a function $g$ to 
\begin{equation}
Tg(r,\theta):=\int_{\{x\, \cdot\, \theta = r\}} g(x)\, dx, \ \ r\in\mathbb{R}, \ \ \theta\in S^{d-1}, \label{examples:Radon}
\end{equation}
where the integral is taken over the hyperplane defined by vectors $x$ satisfying $x\cdot\theta=r$. See Section~\ref{sect:ExampRad} for more details on how our estimator~\eqref{result:EST2} works for the Radon transform.
\item[c)] The convolution operator
\begin{equation*}
Tg(x):=\int_{\Rd}K(x-y)g(y)\, dy
\end{equation*}
for a regular enough kernel $K\in L^1(\Rd)$ satisfies Assumption~\ref{ass:OP}. See Section~\ref{sect:ExampConv} for the details.
\item[d)] The identity operator, in which case we are in the white noise regression model. We can take $\{\psi_{j,k,e}\}$ to be a smooth enough wavelet basis, and the estimator~\eqref{result:EST2} reduces (with minor modifications) to the multiscale total variation estimator analyzed in~\cite{mTV}. Besides some differences in the setting (here we estimate compactly supported functions, there periodic ones), the convergence rate that we prove here coincides for $\beta=0$ with the result in~\cite{mTV}.
\end{itemize}
More generally, operators satisfying a certain homogeneity condition with respect to dilations have a WVD (see~\cite{donoho1995nonlinear} for a general result). 
Conversely, Assumption~\ref{ass:OP} is in general not satisfied for operators $T$ with a strong preference for a particular scale. An extreme example is convolution with a kernel whose Fourier transform has compact support. In that case, the equation $T^*u_{j,k,e}=\kappa_j\psi_{j,k,e}$ does not admit solutions $u_{j,k,e}$ for compactly supported wavelets $\psi_{j,k,e}$. 
\end{examples}

In this setting, we define our estimator as follows.

\begin{definition}\label{results:definition}
Let the observations $dY$ follow the model~\eqref{results:MODEL}, and let the operator $T$ satisfy Assumption~\ref{ass:OP} with a vaguelette system $\{u_{j,k,e}\}$. We denote
\begin{equation}
\hat{f}_n\in\underset{g\in \mathcal{F}_n}{\textup{ argmin }}|g|_{BV}\ \textup{ subject to } \ \max_{\omega\in\Omega_n}\big|\langle u_{\omega},Tg\rangle-\langle u_{\omega},dY\rangle\big|\leq\gamma_n,\label{result:EST2}
\end{equation}
as the \textit{multiscale total variation estimator} for the operator $T$. In~\eqref{result:EST2} we minimize over the set
\begin{equation}
\mathcal{F}_n=\{g\in BV\cap L^{\infty}\, \big|\, \|g\|_{L^{\infty}}\leq \log n, \ \textup{supp } g\subseteq [0,1]^d\}.\label{results:Fn}
\end{equation}
We use the convention that, whenever the feasible set of the problem~\eqref{result:EST2} is empty (which happens with vanishing probability as $n$ grows, see Remark~\ref{results:Rem1}), the estimator $\hat{f}_n$ is set to zero. 
\end{definition}

The reason for requiring the support to be inside the closed unit cube in~\eqref{results:Fn} is to make the set $\mathcal{F}_n$ closed. This is important for ensuring existence of a minimizer in~\eqref{result:EST2} as the limit of a minimizing sequence. 

Concerning the choice of the threshold $\gamma_n$, let $\sigma>0$ be as in~\eqref{results:MODEL}, and let $c_2$ be the upper bound in Assumption~\ref{ass:OP}. For $\kappa>0$, we choose
\begin{equation}
\gamma_n=\kappa\, c_2\, \sigma\, \sqrt{\frac{2\log\#\Omega_n}{n}}.\label{IP:UnivGamma}
\end{equation}
Notice that the upper bound $c_2$ can be computed from the dictionary, as we do in the examples in Section~\ref{sect:Examp}.

\begin{remark}\label{results:Rem1}
Let us discuss the feasible set of the problem~\eqref{result:EST2}, which consists of the constraints
\begin{equation}
\max_{\omega\in\Omega_n}\big|\langle u_{\omega},Tg\rangle-\langle u_{\omega},dY\rangle\big|\leq\gamma_n, \ \ \ \|g\|_{L^{\infty}}\leq \log n, \ \ \  \textup{supp }g\subseteq [0,1]^d\label{results:constraints}.
\end{equation}
Here we assume that the observations $dY$ arise from a function $f\in BV_L$, as defined in~\eqref{setBVL}. 
By Proposition~\ref{app:GoodEvent} below and the choice~\eqref{IP:UnivGamma} for $\gamma_n$, the probability that the true regression function $f$ satisfies the first constraint in~\eqref{results:constraints} is not smaller than $1-O(n^{1-\kappa^2})$. As long as $f$ satisfies the first constraint in~\eqref{results:constraints}, it also satisfies the others for $n$ large enough ($n\geq e^L$), since we assume that $f\in BV_L$. 
As a consequence, the feasible set of~\eqref{result:EST2} is nonempty with probability of the order $1-O(n^{1-\kappa^2})$. Hence, we will see that the caveat in Definition~\ref{results:definition} about the feasible set does not play a decisive role for the convergence properties of $\hat{f}_n$.
\end{remark}

\begin{theorem}\label{results:Main_thm}
For $d\in\mathbb{N}$, let $T$ satisfy Assumption~\ref{ass:OP} with $\beta\geq 0$. Assume the model~\eqref{results:MODEL} with $f\in BV_L$ for some $L>0$. For $q\in \big[1,\infty\big)$, let $\vartheta_{q,\beta}$ be as in~\eqref{results:vartheta2}.
\begin{itemize}
 \item[a)] Let $\gamma_n$ be as in~\eqref{IP:UnivGamma} with $\kappa>1$. Then for any $n\in\mathbb{N}$ with 
 $n\geq e^L$, the estimator $\hat{f}_{n}$ in~\eqref{result:EST2} with parameter $\gamma_n$ satisfies
 \begin{equation}
    \sup_{f\in BV_L}\|\hat{f}_{n}-f\|_{L^q}\leq C\, n^{-\vartheta_{q,\beta}}\, (\log n)^{3-\min\{d,2\}}\label{IP:Gen_Conv}
 \end{equation}
for any $q\in[1,\infty)$ with probability at least $1-\big(\#\Omega_n\big)^{1-\kappa^2}$, for a constant $C>0$ independent of $n$, but depending on $L$, $\sigma$ and $d$.
\item[b)] Under the assumptions of part a), if $\kappa^2>1+1/(d+2\beta+2)$, then
 \begin{equation}
    \sup_{f\in BV_L}\mathbb{E}\big[\|\hat{f}_{n}-f\|_{L^q}\big]\leq C\, n^{-\vartheta_{q,\beta}}\, (\log n)^{3-\min\{d,2\}}\label{IP:Gen_Conv_Exp}
 \end{equation}
 holds for any $q\in[1,\infty)$, $n$ large enough and a constant $C>0$ independent of $n$. 
\end{itemize}
\end{theorem}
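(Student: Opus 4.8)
The plan is to reduce the $L^q$-risk to two ingredients that are controlled separately: the $BV$ seminorm of $\hat f_n-f$, which is bounded by construction, and the negative-smoothness Besov norm $\|\hat f_n-f\|_{\besov}$, which is tied to the multiscale constraint in~\eqref{result:EST2}. The backbone is the interpolation inequality~\eqref{Intro:DummyInt}, valid for $q\in[1,1+2/(d+2\beta)]$ and $d\ge 2$, which I take as given. First I would fix the good event $A_n=\{\max_{\omega\in\Omega_n}(\sigma/\sqrt n)|\langle u_\omega,dW\rangle|\le\gamma_n\}$; by Proposition~\ref{app:GoodEvent} and the choice~\eqref{IP:UnivGamma}, $\mathbb{P}(A_n)\ge 1-(\#\Omega_n)^{1-\kappa^2}$. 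On $A_n$ and for $n\ge e^L$, the true $f\in BV_L$ is feasible for~\eqref{result:EST2}: it satisfies the multiscale constraint by~\eqref{intro:QuantileArg}, and $\|f\|_{L^\infty}\le L\le\log n$ with support in the unit cube. Hence $\hat f_n$ is a genuine minimizer (not the zero fallback) and in particular $|\hat f_n|_{BV}\le|f|_{BV}\le L$; combined with $\|\hat f_n\|_{L^\infty}\le\log n$ and the unit-cube support this gives $\|\hat f_n-f\|_{BV}\le\|\hat f_n-f\|_{L^1}+|\hat f_n-f|_{BV}=O(\log n)$.

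The technical core is the bound $\|\hat f_n-f\|_{\besov}\le C\,n^{-1/2}\log n$ on $A_n$. Writing the Besov norm through wavelet coefficients, $\|g\|_{\besov}=\sup_{j\ge 0}2^{-j\beta}\sup_{k,e}|\langle\psi_{j,k,e},g\rangle|$, and using the vaguelette identity $\langle u_\omega,Tg\rangle=\kappa_j\langle\psi_\omega,g\rangle=2^{-j\beta}\langle\psi_\omega,g\rangle$, each term equals $2^{-j\beta}|\langle\psi_\omega,g\rangle|=|\langle u_\omega,Tg\rangle|$. I would split the supremum at $J=\lceil d^{-1}\log n\rceil$. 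For scales $j\le J$ (that is, $\omega\in\Omega_n$), the triangle inequality with the constraint obeyed by $\hat f_n$ and, on $A_n$, by $f$ gives $|\langle u_\omega,T(\hat f_n-f)\rangle|\le 2\gamma_n=O(n^{-1/2}\sqrt{\log n})$. For the tail $j>J$ I would deliberately use the $L^\infty$ bound rather than the $BV$ bound: since $\|\psi_{j,k,e}\|_{L^1}=O(2^{-jd/2})$ and $\|\hat f_n-f\|_{L^\infty}\le 2\log n$, one has $2^{-j\beta}|\langle\psi_\omega,\hat f_n-f\rangle|\le C\log n\,2^{-j(\beta+d/2)}$, and as this is decreasing in $j$ the supremum over $j>J$ is $\le C\log n\,2^{-J(\beta+d/2)}\asymp C\log n\,n^{-(\beta+d/2)/d}=O(n^{-1/2}\log n)$, using $(\beta+d/2)/d\ge 1/2$. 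Taking the maximum of the two contributions gives the claimed Besov bound.

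With both ingredients in hand, inserting them into~\eqref{Intro:DummyInt} for $q\in[1,1+2/(d+2\beta)]$ yields $\|\hat f_n-f\|_{L^q}\le C(n^{-1/2}\log n)^{2/(d+2\beta+2)}(\log n)^{(d+2\beta)/(d+2\beta+2)}=C\,n^{-1/(d+2\beta+2)}\log n$, which is the rate $n^{-\vartheta_{q,\beta}}(\log n)^{3-\min\{d,2\}}$ for $d\ge 2$. For $q>1+2/(d+2\beta)$ I would interpolate the $L^p$ norms: with $q_0=1+2/(d+2\beta)=(d+2\beta+2)/(d+2\beta)$ and $g=\hat f_n-f$ supported in the unit cube, $\|g\|_{L^q}\le\|g\|_{L^{q_0}}^{q_0/q}\|g\|_{L^\infty}^{1-q_0/q}$, and inserting the two bounds produces exactly $C\,n^{-1/(q(d+2\beta))}\log n=C\,n^{-\vartheta_{q,\beta}}\log n$, matching~\eqref{results:vartheta2} and completing part a) for $d\ge 2$. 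For part b) I would use the crude deterministic bound $\|\hat f_n-f\|_{L^q}\le\log n+L$ on $A_n^c$ (valid whether $\hat f_n$ obeys the $L^\infty$ constraint or is the zero fallback), so that $\mathbb{E}\|\hat f_n-f\|_{L^q}\le C\,n^{-\vartheta_{q,\beta}}\log n+(\log n+L)(\#\Omega_n)^{1-\kappa^2}$; since $\#\Omega_n\asymp n$ and $\max_q\vartheta_{q,\beta}=1/(d+2\beta+2)$, the assumption $\kappa^2>1+1/(d+2\beta+2)$ makes the second term negligible.

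I expect the main obstacle to be twofold. The genuinely load-bearing estimate is the Besov tail: one must resist using the $BV$ bound, whose wavelet decay $2^{j(d/2-1)}$ is too weak for $d\ge 3$ and small $\beta$, and instead exploit $L^\infty$-boundedness, which is exactly why the $L^\infty$ cap is built into $\mathcal{F}_n$ in~\eqref{results:Fn}; making the cutoff $J$ and the exponent $(\beta+d/2)/d\ge 1/2$ conspire into the clean $n^{-1/2}\log n$ requires care with the $\log_2$ bookkeeping. Separately, the case $d=1$ does not follow from~\eqref{Intro:DummyInt} as stated and needs a modified interpolation inequality that loses an extra logarithmic factor, which is precisely what produces the exponent $(\log n)^{3-\min\{d,2\}}=(\log n)^2$ when $d=1$.
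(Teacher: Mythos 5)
Your proposal is correct and follows essentially the same route as the paper's proof: conditioning on the event $\mathcal{A}_n$ with the probability bound from Proposition~\ref{app:GoodEvent}, bounding $\|\hat f_n-f\|_{BV}$ via minimality and $\|\hat f_n-f\|_{\besov}$ by splitting the wavelet scales at $J=\lceil d^{-1}\log n\rceil$ (the multiscale constraint below $J$, the $L^\infty$ cap together with $\|\psi_{j,k,e}\|_{L^1}\leq C\,2^{-jd/2}$ above), feeding both into the interpolation inequality of Proposition~\ref{ParticIntd=1}, and passing to $q>1+2/(d+2\beta)$ by H\"older --- this reproduces Proposition~\ref{proofs:AuxLemma} and the paper's assembly, and your part b) fills in the argument the paper defers to \cite{mTV}. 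The one ingredient you leave unproved, the log-modified interpolation inequality needed for $d=1$, $\beta<1/2$, is exactly part b) of Proposition~\ref{ParticIntd=1}, which the paper establishes separately via Proposition~\ref{new_d=1} and a Riesz--Thorin argument, so your identification of it as the remaining obstacle (and of the resulting $(\log n)^2$ factor) is accurate.
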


Theorem~\ref{results:Main_thm} gives an upper bound for the expected error of $\hat{f}_n$. We now prove a matching lower bound. For that, we assume that $T$ satisfies
\begin{equation}
\|T\psi_{j,k,e}\|_{L^2}\leq c\, 2^{-j\beta} \ \ \forall (j,k,e)\in\Omega\label{lowBound:OP}
\end{equation}
for a constant $c>0$, where $\{\psi_{j,k,e}\}$ is a wavelet basis of compactly supported wavelets. We remark that~\eqref{lowBound:OP} is satisfied by any operator with a WVD (see~\cite{donoho1995nonlinear}).

\begin{theorem}\label{results:Thm2}
Consider the setting of Theorem~\ref{results:Main_thm}, and assume that the operator $T$ admits a WVD. Then the minimax $L^q$-risk over $BV_L$ given observations~\eqref{results:MODEL} is lower bounded by $c\, n^{-\vartheta_{q,\beta}}$. In particular, the estimator~\eqref{result:EST2} is asymptotically minimax optimal up to logarithmic factors for estimating functions $f\in BV_L$, $L>0$, with respect to the $L^q$-risk, for any $q\in\big[1,\infty\big)$.
\end{theorem}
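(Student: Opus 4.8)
The plan is to obtain the lower bound by the standard reduction of minimax estimation to a multiple hypothesis testing problem: I would construct a finite family of alternatives $\{f_0,\dots,f_M\}\subset BV_L$ that are well separated in $L^q$ but statistically nearly indistinguishable, and then invoke Fano's inequality together with the Varshamov--Gilbert bound (degenerating to a two-point Le Cam argument when $M=2$). The recurring building block is a single-scale wavelet perturbation $f=f_0+\sum_{k\in S}\varepsilon_k\,c\,\psi_{j,k,e}$ with $\varepsilon_k\in\{0,1\}$ and $S$ a set of $m$ positions whose supports lie inside $(0,1)^d$; such an $f$ is smooth, compactly supported, and lies in $B^1_{1,1}\subset BV$, which shows in passing that the bound already holds over the smaller class $B^1_{1,1}$. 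For such perturbations the relevant quantities scale explicitly: the $BV$-seminorm $\asymp m\,c\,2^{j(1-d/2)}$, the sup-norm $\asymp c\,2^{jd/2}$, the $L^q$-separation $\asymp m^{1/q}c\,2^{jd(1/2-1/q)}$ (for disjoint supports), and---crucially---the Kullback--Leibler divergence, which in the white-noise model \eqref{results:MODEL} equals $\tfrac{n}{2\sigma^2}\|T(f-f_0)\|_{L^2}^2$.

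The decisive point is that \emph{two different constructions are needed}, one on each regime of \eqref{results:vartheta2}, because the $BV_L$-constraint in \eqref{setBVL} interacts with the $L^q$-loss differently depending on whether the sup-norm constraint is active. For $q\le 1+2/(d+2\beta)$ I would take a \emph{dense} family: all $\asymp 2^{jd}$ wavelets at a single scale $j$ are active with small amplitude $c$, so the heights $c\,2^{jd/2}$ are negligible and the binding constraints are the $BV$-budget and the KL budget; balancing these at $2^{j}\asymp n^{1/(d+2\beta+2)}$ yields an $L^q$-separation $\asymp 2^{-j}\asymp n^{-1/(d+2\beta+2)}$, uniformly in $q$. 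For $q>1+2/(d+2\beta)$ I would instead use a \emph{block} family that is neither sparse nor dense: bumps of maximal height $\asymp L$ (so the sup-norm constraint now saturates and fixes $c\asymp 2^{-jd/2}$), placed with pairwise disjoint supports at only $m\asymp 2^{j(d-1)}$ of the $2^{jd}$ available positions, the number of blocks being capped precisely by the $BV$-budget $m\,c\,2^{j(1-d/2)}\lesssim L$. Choosing $2^{j}\asymp n^{1/(d+2\beta)}$ so that the KL divergence stays below $\log M$, the $L^q$-separation becomes $\asymp m^{1/q}c\,2^{jd(1/2-1/q)}=2^{-j/q}\asymp n^{-1/(q(d+2\beta))}$. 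Taking the larger of the two separations reproduces exactly $n^{-\vartheta_{q,\beta}}$ on each regime; combined with the matching upper bound of Theorem~\ref{results:Main_thm}, this gives minimax optimality up to the logarithmic factor.

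To control the divergence I would bound $\|T(f_i-f_{i'})\|_{L^2}^2\le C\,\kappa_j^2\,\#\{k:\varepsilon_k^{(i)}\neq\varepsilon_k^{(i')}\}\,c^2\lesssim C\,2^{-2j\beta}m\,c^2$. This is where the full WVD hypothesis (rather than \eqref{lowBound:OP} alone) is used: the naive triangle inequality would lose a factor $m$ and ruin the rate, so one needs the quasi-orthogonality of the images $\{T\psi_{j,k,e}\}_k$ at a fixed scale, which follows from the Bessel (upper frame) bound of the vaguelette system $\{u_{j,k,e}\}$ via the identity $\langle u_\omega,Tg\rangle=\kappa_\omega\langle\psi_\omega,g\rangle$. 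With this estimate one checks that the averaged KL is $o(\log M)$ for the packing produced by Varshamov--Gilbert, so that Fano's inequality applies and delivers the lower bound on the finite family, hence over $BV_L$.

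I expect the main obstacle to be the \emph{block} construction for the slow regime: the dense family that produces the classical rate $n^{-1/(d+2\beta+2)}$ is too weak here, and one must identify the correct sparsity level $m\asymp 2^{j(d-1)}$ and recognize that it is the $BV$-budget, not the sup-norm, that limits the number of blocks---this is exactly the ``blocks of signals at different locations'' phenomenon. The accompanying bookkeeping of the three competing constraints (sup-norm, $BV$, and KL) as functions of the two free parameters $(j,m)$, arranged so that the exponent lands precisely on $1/(q(d+2\beta))$, is the delicate part; by contrast the KL estimate via WVD quasi-orthogonality is comparatively routine once the frame structure is invoked. For $d=1$ the block family degenerates to a single tall bump ($m\asymp 1$), so there the slow-regime bound is obtained by a plain two-point Le Cam argument.
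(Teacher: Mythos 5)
Your proposal is correct, and it uses the same key construction as the paper -- blocks of $m\asymp 2^{j(d-1)}$ order-one bumps at scale $2^j\asymp n^{1/(d+2\beta)}$, with the $BV$-budget capping the number of blocks once the sup-norm fixes the amplitude $c\asymp 2^{-jd/2}$ -- but you run it through a different information-theoretic engine. The paper uses Assouad's lemma (Lemma 10.2 in~\cite{hardle2012wavelets}) on the cube $\{-1,+1\}^{S_j}$ of sign patterns, whereas you use Fano's inequality with a Varshamov--Gilbert packing. This difference matters for the hypotheses you need on $T$: under Assouad one only compares alternatives differing in a \emph{single} coordinate, so the Kullback--Leibler divergence involves a single wavelet image and the bound $\|T\psi_{j,k,e}\|_{L^2}\leq c\,2^{-j\beta}$ of~\eqref{lowBound:OP} suffices, with no quasi-orthogonality at all; under Fano the alternatives differ in $\asymp m$ coordinates, so you genuinely need the frame structure of the WVD to avoid losing a factor of $m$, exactly as you identified. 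The paper's route is thus more economical in assumptions (it explicitly isolates~\eqref{lowBound:OP} as the only property used), while yours uses the full WVD hypothesis that the theorem grants anyway; both deliver the same rate $2^{-j/q}\asymp n^{-1/(q(d+2\beta))}$. One small correction to your key estimate: the quasi-orthogonality $\|T(\sum_k\delta_k\psi_{j,k,e})\|_{L^2}^2\leq C\kappa_j^2\sum_k\delta_k^2$ does \emph{not} follow from the Bessel (upper frame) bound of $\{u_{j,k,e}\}$ via the identity $\langle u_\omega,Tg\rangle=\kappa_\omega\langle\psi_\omega,g\rangle$; that identity requires the \emph{lower} frame bound of $\{u_\omega\}$ (equivalently, the Bessel bound of the image system $\{v_\omega\}$ with $T\psi_\omega=\kappa_\omega v_\omega$ in Donoho's decomposition) to convert coefficient control into norm control. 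Since a full WVD provides both systems with both bounds, this is a misattribution rather than a gap. Finally, the paper treats the dense regime $q\leq 1+2/(d+2\beta)$ by citing the literature and only details the slow regime, while you sketch both; your bookkeeping in each regime (including the balance $2^j\asymp n^{1/(d+2\beta+2)}$ in the dense case and the degeneration to a two-point argument when $d=1$) is consistent with the paper's.
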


\subsection{Proofs of the main theorems}\label{Sec:Main_Proof}

\subsubsection{Proof of Theorem~\ref{results:Main_thm}}

The proof of Theorem~\ref{results:Main_thm} relies on a variant of an interpolation inequality prove by~\cite{cohen2003harmonic}.

\begin{proposition}\label{ParticIntd=1}
For $d\in\mathbb{N}$ and $\beta\geq 0$, let $q^*:=1+2/(d+2\beta)$. 
\begin{itemize}
\item[a)] If $q^*\leq 2$, there is a constant $C>0$ such that
\begin{equation*}
 \|g\|_{L^q}\leq C\, \|g\|_{B^{-d/2-\beta}_{\infty,\infty}}^{\frac{2}{d+2\beta+2}}\|g\|_{BV}^{\frac{d+2\beta}{d+2\beta+2}}
\end{equation*}
holds for any $q\in[1,q^*]$ and any $g\in B^{-d/2-\beta}_{\infty,\infty}\cap BV$ with supp $g\subseteq [0,1]^d$.
\item[b)] If $q^*>2$, then there is a constant $C>0$ such that for any $n\in\mathbb{N}$ we have
\begin{equation*}
  \|g\|_{L^q}\leq C (\log n )\, \|g\|_{B^{-d/2-\beta}_{\infty,\infty}}^{\frac{2}{d+2\beta+2}}\|g\|_{BV}^{\frac{d+2\beta}{d+2\beta+2}}+C\, n^{-1}\, \|g\|_{L^{\infty}}^{\frac{2}{d+2\beta+2}}\, \|g\|_{BV}^{\frac{d+2\beta}{d+2\beta+2}}
 \end{equation*}
for any $q\in[1,q^*]$ and any $g\in L^{\infty}\cap BV$ with supp $g\subseteq[0,1]^d$.
\end{itemize}
\end{proposition}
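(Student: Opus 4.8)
The plan is to prove both parts by passing to the wavelet expansion $g=\sum_{\lambda}c_\lambda\psi_{j,k,e}$, $\lambda=(j,k,e)$, of the $L^2$-normalized basis, and to play off two complementary inputs against each other. The first is a \emph{scale-by-scale ceiling} coming from the negative Besov norm: by the wavelet characterization of $\besov$ recalled in Section~\ref{sect:Notation}, $\sup_{k,e}|c_{j,k,e}|\le 2^{j\beta}\,\|g\|_{\besov}$. The second is a \emph{global size bound} coming from the $BV$ seminorm through the result of~\cite{cohen2003harmonic}: for $d\ge 2$ the $B^1_{1,1}$-normalized coefficients lie in weak-$\ell^1$, i.e. $\#\{\lambda:\,2^{j(1-d/2)}|c_\lambda|>t\}\le C\,\|g\|_{BV}/t$ for all $t>0$, while for $d=1$ one has the stronger per-scale bound $\sum_k|c_{j,k}|\le C\,2^{-j/2}\|g\|_{BV}$. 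The $L^\infty$-normalization $\|\psi_{j,k,e}\|_{L^q}\asymp 2^{jd(1/2-1/q)}$ and the bounded overlap of wavelets at each fixed scale will be used throughout.

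For part (a) note that $q^*\le 2$ is equivalent to $d+2\beta\ge 2$, so the whole range $q\in[1,q^*]$ lies in $[1,2]$. First I would reduce the $L^q$ norm to a coefficient sum: for $1\le q\le 2$ the Littlewood--Paley square function together with $\|\cdot\|_{\ell^2}\le\|\cdot\|_{\ell^q}$ gives $\|g\|_{L^q}\lesssim\big(\sum_\lambda 2^{jqd(1/2-1/q)}|c_\lambda|^q\big)^{1/q}$. I would then estimate the inner sum at each scale $j$ by a layer-cake decomposition that imports both inputs, using that at scale $j$ there are $N_j\asymp 2^{jd}$ active indices and that the weak-$\ell^1$ count restricted to scale $j$ is still bounded by the global one:
\[
\sum_{k,e}|c_{j,k,e}|^q=\int_0^\infty q u^{q-1}\,\#\{(k,e):|c_{j,k,e}|>u\}\,du\le\int_0^{2^{j\beta}\|g\|_{\besov}} q u^{q-1}\min\!\Big(N_j,\ \tfrac{C\|g\|_{BV}}{2^{j(1-d/2)}u}\Big)du.
\]
Carrying out this one-dimensional integral (splitting at the crossover $u_*\asymp\|g\|_{BV}2^{-j(1+d/2)}$ where the two entries of the minimum agree, and against the ceiling $2^{j\beta}\|g\|_{\besov}$) yields a geometric-in-$j$ per-scale bound; summing over $j$ converges \emph{precisely} when $q\le q^*=1+2/(d+2\beta)$, and optimizing the split between the $\|g\|_{\besov}$ and $\|g\|_{BV}$ contributions produces the exponents $\tfrac{2}{d+2\beta+2}$ and $\tfrac{d+2\beta}{d+2\beta+2}$. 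As a sanity check, the case $d=2$, $q=2$, $\beta=0$ collapses all three normalizations, and the estimate reduces to the clean $\sum_\lambda|c_\lambda|^2=\int_0^{\|g\|_{\besov}}2t\,\#\{|c_\lambda|>t\}\,dt\lesssim\|g\|_{\besov}\|g\|_{BV}$.

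For part (b), $q^*>2$ forces $d+2\beta<2$ (so $d=1$), and the range $[1,q^*]$ now contains values $q>2$ for which the square-function reduction $\|g\|_{L^q}\lesssim\|(\tilde c_\lambda)\|_{\ell^q}$ fails. Here I would instead bound $\|g\|_{L^q}\le\sum_j\|P_jg\|_{L^q}$ by the triangle inequality over scales, with $\|P_jg\|_{L^q}\asymp 2^{jd(1/2-1/q)}\big(\sum_{k,e}|c_{j,k,e}|^q\big)^{1/q}$, and truncate the expansion at the scale $J\asymp\log n$ (which carries $\asymp n$ coefficients). On $j\le J$ the per-scale estimate of part (a), now using the sharper $d=1$ bound $\sum_k|c_{j,k}|\lesssim 2^{-j/2}\|g\|_{BV}$, gives contributions of comparable size at the critical balance, so summing the $\asymp\log n$ scales costs the factor $\log n$ in the first term. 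The high-scale tail $j>J$ I would control by interpolating $\|P_jg\|_{L^q}$ between $L^\infty$ and $BV$; the resulting $2^{-j}$-type decay summed from $j>J\asymp\log n$ produces the $n^{-1}$ prefactor, yielding the second term $C\,n^{-1}\|g\|_{L^\infty}^{2/(d+2\beta+2)}\|g\|_{BV}^{(d+2\beta)/(d+2\beta+2)}$.

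The main obstacle, and the step I would spend the most care on, is the coupling between coefficient size and scale: the Besov ceiling is natural in the $L^2$-normalization, the $BV$ control is natural in the $B^1_{1,1}$-normalization, and the target $L^q$ norm wants the $L^q$-normalization, so no single global rearrangement suffices and the two inputs must be combined \emph{inside} each scale while the scale weights are summed afterwards. Getting the integral in the displayed layer-cake to converge exactly at $q=q^*$ (rather than off by an endpoint) is the delicate point, since that threshold is precisely where the phase transition in $\vartheta_{q,\beta}$ originates; a secondary care point is invoking the correct weak-$\ell^1$ normalization from~\cite{cohen2003harmonic}, which I would pin down by checking it against a single wavelet, where $\|g\|_{BV}\asymp 2^{j(1-d/2)}$ matches the weak-$\ell^1$ quasinorm of its one nonzero normalized coefficient.
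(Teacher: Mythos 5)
Your part (b) is essentially correct and follows the same route as the paper's Proposition~\ref{new_d=1}: truncate the wavelet expansion at $J\asymp\log n$, pay a factor $J$ on the low scales, and control the tail through the coefficient bounds $\max|c_{j,k,e}|\lesssim 2^{-jd/2}\|g\|_{L^\infty}$ and $\sum_{k,e}|c_{j,k,e}|\lesssim 2^{j(d/2-1)}\|g\|_{BV}$, which give the geometric factor $2^{-J/q}\sim n^{-1}$. The genuine gap is in part (a). Your claim that the per-scale layer-cake, summed over $j$, converges ``precisely when $q\le q^*$'' is false at the endpoint: it converges only for $q<q^*$. Indeed, carrying out your displayed integral, the dominant (weak-$\ell^1$ region) contribution of scale $j$ to $\sum_\lambda 2^{jqd(1/2-1/q)}|c_\lambda|^q$ is
\begin{equation*}
2^{jqd(1/2-1/q)}\,\|g\|_{BV}\,2^{j(d/2-1)}\big(2^{j\beta}\|g\|_{\besov}\big)^{q-1}=\|g\|_{BV}\,\|g\|_{\besov}^{\,q-1}\,2^{j\big[\frac{(d+2\beta)(q-1)}{2}-1\big]},
\end{equation*}
and this exponent vanishes exactly at $q=q^*=1+2/(d+2\beta)$: every scale then contributes the same amount, the sum over the infinitely many scales present in part (a) diverges, and for $q<q^*$ your constant blows up like $(q^*-q)^{-1}$. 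The endpoint cannot be waived, since the main theorem obtains all rates for $q>q^*$ by H\"older interpolation against $L^{q^*}$.

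Moreover, this failure is structural rather than a matter of sharper bookkeeping within your scheme. By restricting the weak-$\ell^1$ count to a single scale you discard the only information that can rescue the endpoint, namely that the budget $\|g\|_{BV}$ is shared by \emph{all} scales simultaneously: a coefficient array saturating your per-scale bounds on $m$ consecutive scales is consistent with every hypothesis you actually use, yet its $B^0_{q^*,q^*}$-mass grows linearly in $m$. Ruling out such arrays requires the global, cross-scale weak-$\ell^1$ estimate, and exploiting it at the endpoint is precisely the nontrivial content of Theorem 1.5 of~\cite{cohen2003harmonic}. This is why the paper cites that theorem (with $s=-d/2-\beta$, $p=\infty$, which directly yields the $B^0_{q^*,q^*}$ bound) and then only needs the embedding $B^0_{r,r}(\Rd)\hookrightarrow L^r(\Rd)$ for $r\in(1,2]$ from~\cite{TriebelI}, rather than re-deriving the interpolation from the weak-$\ell^1$ property. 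Your $d=2$, $\beta=0$, $q=2$ sanity check is correct but misleading: only in that case do the $L^2$-, $BV$- and $L^q$-normalizations coincide, so that the global count can be fed into a single layer-cake; for all other $(d,\beta)$ the scale-dependent factors reappear and the per-scale argument hits the divergence above. Incidentally, the $\log n$ you would collect by summing $\asymp\log n$ equal per-scale contributions is exactly why part (b) carries a $\log n$ factor while part (a) must not.
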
 
The proof of Proposition~\ref{ParticIntd=1} is given in Section~\ref{sect:Proofs} below. Define the event
\begin{equation}
\mathcal{A}_n:=\bigg\{\max_{(j,k,e)\in\Omega_n}\bigg|\int_{\M} u_{j,k,e}(x)\, dW(x)\bigg| \leq \frac{\sqrt{n}}{\sigma}\, \gamma_n \bigg\},\label{sketch:GoodEvent}
\end{equation}
where $\{u_{j,k,e}\}$ is the vaguelette system from Assumption~\ref{ass:OP}. 

\begin{proposition}\label{app:GoodEvent}
 Let $\{u_{j,k,e}\}$ be a vaguelette system as described in Assumption~\ref{ass:OP}. For any $n\in\mathbb{N}$ we have
 \begin{equation*}
  \mathbb{P}\bigg(\max_{(j,k,e)\in\Omega_n}\bigg|\int_{\M}u_{j,k,e}(x)\, dW(x)\bigg|\geq c_2\, t\bigg)\leq \#\Omega_n\,  e^{-t^2/2}
 \end{equation*}
for any $t\geq 0$, where $c_2$ is the upper bound in Assumption~\ref{ass:OP}.
\end{proposition}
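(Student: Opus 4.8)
The plan is to exploit the single defining property of the Gaussian white noise process that we actually need here, namely that integrating a deterministic $L^2$ function against $dW$ produces a centered Gaussian whose variance is the squared $L^2$-norm of that function, and then to combine a one-dimensional Gaussian tail estimate with a crude union bound over the finite index set $\Omega_n$. Concretely, for each $\omega=(j,k,e)\in\Omega_n$ I set $X_{\omega}:=\int_{\M}u_{\omega}(x)\, dW(x)$. By the definition of the white noise process on $L^2(\M)$ (Section 2.1.2 in~\cite{gine2015mathematical}), the $X_{\omega}$ are centered Gaussian random variables with $\mathbb{E}[X_{\omega}^2]=\|u_{\omega}\|_{L^2}^2$. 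By the upper bound in Assumption~\ref{ass:OP} these variances are all controlled by $c_2^2$, so the statement reduces to a maximal inequality for a finite family of (possibly correlated, but that is irrelevant for a union bound) Gaussians with uniformly bounded variance.

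First I would control a single term. Fixing $\omega$ and writing $s_{\omega}:=\|u_{\omega}\|_{L^2}\le c_2$, standardization gives $X_{\omega}/s_{\omega}\sim N(0,1)$, so for every $t\ge 0$,
\begin{equation*}
\mathbb{P}\big(|X_{\omega}|\ge c_2\, t\big)=\mathbb{P}\big(|X_{\omega}|/s_{\omega}\ge (c_2/s_{\omega})\, t\big)\le \mathbb{P}\big(|Z|\ge t\big),
\end{equation*}
where $Z\sim N(0,1)$ and the last step uses $c_2/s_{\omega}\ge 1$ together with the monotonicity of the Gaussian tail. It then remains to invoke the elementary bound $\mathbb{P}(|Z|\ge t)=2\,\mathbb{P}(Z\ge t)\le e^{-t^2/2}$, valid for all $t\ge 0$ (this follows, for instance, by checking that $t\mapsto \tfrac12 e^{-t^2/2}-\mathbb{P}(Z\ge t)$ vanishes at $0$ and at $\infty$ and is nonnegative in between).

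Finally I would apply the union bound,
\begin{equation*}
\mathbb{P}\Big(\max_{\omega\in\Omega_n}|X_{\omega}|\ge c_2\, t\Big)\le \sum_{\omega\in\Omega_n}\mathbb{P}\big(|X_{\omega}|\ge c_2\, t\big)\le \#\Omega_n\, e^{-t^2/2},
\end{equation*}
which is exactly the claim. The argument is essentially routine; the only point that requires a little care is the direction of the standardization inequality: because $s_{\omega}$ may be strictly smaller than $c_2$, one must verify that replacing $s_{\omega}$ by the larger threshold $c_2$ can only enlarge the tail probability, which is precisely what $c_2/s_{\omega}\ge 1$ guarantees. Note that neither the lower bound $c_1$ nor the frame property of the vaguelette system plays any role in this estimate.
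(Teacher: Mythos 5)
Your proposal is correct and follows essentially the same route as the paper's proof: a union bound over the finite set $\Omega_n$ combined with the standard Gaussian tail bound $\mathbb{P}(|Z|\geq t)\leq e^{-t^2/2}$, using the upper bound $\|u_{\omega}\|_{L^2}\leq c_2$ from Assumption~\ref{ass:OP} to reduce to the standard normal case (the paper normalizes by $c_2$ directly rather than by $\|u_{\omega}\|_{L^2}$, and proves the tail bound by exponential tilting rather than your derivative argument, but these are cosmetic differences). No gaps.
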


\begin{proof}
 The random variables $\epsilon_{j,k,e}:=c_2^{-1}\int_{\M}u_{j,k,e}(x)\, dW(x)$ are normal with variance smaller than one, since 
 $\|u_{j,k,e}\|_{L^2}\leq c_2$ by the inequality in Assumption~\ref{ass:OP}. By the union bound we have
 \begin{align*}
  \mathbb{P}\big(\max_{(j,k,e)\in\Omega_n}|\epsilon_{j,k,e}|\geq t\big)&\leq \sum_{(j,k,e)\in\Omega_n}\mathbb{P}(|\epsilon_{j,k,e}|\geq t)
 \end{align*}
for any $t\geq 0$, and the probability in the right-hand side can be bounded as
\begin{align*}
\mathbb{P}(|\epsilon_{j,k,e}|\geq t)\leq 2\int_t^{\infty}e^{-x^2/2}\, \frac{dx}{\sqrt{2\pi}} \leq 2\, e^{-t^2}\int_t^{\infty} e^{-x^2/2+xt}\, \frac{dx}{\sqrt{2\pi}}=e^{-t^2/2}.
\end{align*}
In the first inequality, we bounded the probability that $|\epsilon_{j,k,e}|\geq t$ by the probability that a standard normal random variable is larger than $t$ in absolute value. This is justified by the fact that $\epsilon_{j,k,e}$ has variance smaller than one for all indices.
\end{proof}

We begin with an auxiliary result for the proof of Theorem~\ref{results:Main_thm}, which is essentially a regularity result for $\hat{f}_n$ conditionally on the event $\mathcal{A}_n$ in~\eqref{sketch:GoodEvent}. In the following proofs, $C>0$ denotes a generic constant that may change from line to line.

\begin{proposition}\label{proofs:AuxLemma}
Let $\{\psi_{j,k,e}\}$ and $\{u_{j,k,e}\}$ denote the wavelet and vaguelette systems from Assumption~\ref{ass:OP}. For $n\geq e^L$, let $\hat{f}_n$ denote the estimator~\eqref{result:EST2} with parameter $\gamma_n$ given by~\eqref{IP:UnivGamma}. Then conditionally on the event $\mathcal{A}_n$ in~\eqref{sketch:GoodEvent} we have
\begin{align*}
(i) & \ \ \|\hat{f}_n-f\|_{B^{-d/2-\beta}_{\infty,\infty}}\leq C\, \gamma_n+C\frac{\|f\|_{L^{\infty}}+\log n}{\sqrt{n}},
  \\
  (ii) & \ \ \|\hat{f}_n-f\|_{BV}\leq \|f\|_{L^{\infty}}+2|f|_{BV}+\log n,
\end{align*}
for any $f\in BV\cap L^{\infty}(\Rd)$ with supp $f\subseteq[0,1]^d$, and a constant $C>0$ independent of $n$, $f$ and $\hat{f}_n$.
\end{proposition}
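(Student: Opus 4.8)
The plan is to prove both bounds by working directly with the wavelet coefficients, exploiting that the negative-smoothness Besov norm is, up to the weights $\kappa_j=2^{-j\beta}$, a weighted supremum of wavelet coefficients. Unwinding the definition of the Besov norm in Section~\ref{sect:Notation} for $s=-d/2-\beta$, $p=q=\infty$ (so the exponent $s+d/2-d/p$ equals $-\beta$) gives
\[
\|g\|_{B^{-d/2-\beta}_{\infty,\infty}}=\sup_{(j,k,e)}2^{-j\beta}|\langle\psi_{j,k,e},g\rangle|=\sup_{(j,k,e)}\kappa_j|\langle\psi_{j,k,e},g\rangle|.
\]
For part~(i) I would split this supremum at the cutoff scale $J:=\lceil d^{-1}\log n\rceil$ defining $\Omega_n$, treating the coarse scales $\omega\in\Omega_n$ and the fine scales $\omega\notin\Omega_n$ by completely different arguments.

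On the coarse scales the defining relation $T^*u_\omega=\kappa_j\psi_\omega$ turns the wavelet coefficient into a vaguelette coefficient, $\kappa_j\langle\psi_\omega,g\rangle=\langle u_\omega,Tg\rangle$. First I would check that, on $\mathcal{A}_n$ and for $n\ge e^L$ (so that $\|f\|_{L^\infty}\le\log n$), the truth $f$ lies in $\mathcal{F}_n$ and satisfies the data-fidelity constraint, so the feasible set is nonempty and $\hat f_n$ is a genuine minimiser. Then, writing $\kappa_j\langle\psi_\omega,\hat f_n-f\rangle=\big(\langle u_\omega,T\hat f_n\rangle-\langle u_\omega,dY\rangle\big)+\big(\langle u_\omega,dY\rangle-\langle u_\omega,Tf\rangle\big)$ and recalling $dY=Tf\,dx+\tfrac{\sigma}{\sqrt n}dW$, the first bracket is $\le\gamma_n$ because $\hat f_n$ is feasible, while the second equals $\tfrac{\sigma}{\sqrt n}\int_{\M}u_\omega\,dW$, which is $\le\gamma_n$ in absolute value precisely on $\mathcal{A}_n$. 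This yields $\kappa_j|\langle\psi_\omega,\hat f_n-f\rangle|\le 2\gamma_n$ uniformly over $\Omega_n$.

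The fine scales are the crux, since the constraint says nothing there, and I expect this to be the main obstacle. Here I would use only the crude size bound $|\langle\psi_\omega,g\rangle|\le\|g\|_{L^\infty}\|\psi_\omega\|_{L^1}\le C\|g\|_{L^\infty}2^{-jd/2}$ (compact support and $L^2$-normalisation of the wavelets), so that $\kappa_j|\langle\psi_\omega,g\rangle|\le C\|g\|_{L^\infty}2^{-j(\beta+d/2)}$. For $j>J$ this decreases in $j$, hence is bounded by its value at the cutoff, and $J$ was chosen exactly so that $2^{J}\asymp n^{1/d}$ (since $\#\Omega_n\asymp 2^{dJ}\asymp n$), whence $2^{-J(\beta+d/2)}\asymp n^{-\beta/d-1/2}\le n^{-1/2}$. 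Applying this to both $g=\hat f_n$ (with $\|\hat f_n\|_{L^\infty}\le\log n$ since $\hat f_n\in\mathcal{F}_n$) and $g=f$ bounds the fine-scale part by $C(\log n+\|f\|_{L^\infty})n^{-1/2}$. Taking the supremum over coarse and fine scales gives~(i). This step is where the negative Besov smoothness is essential: it is precisely the weight $2^{-j(\beta+d/2)}$ that tames the uncontrolled high-frequency coefficients down to the $n^{-1/2}$ level.

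For part~(ii) I would split $\|\hat f_n-f\|_{BV}=\|\hat f_n-f\|_{L^1}+|\hat f_n-f|_{BV}$. The seminorm is handled by subadditivity together with optimality: since $f$ is feasible on $\mathcal{A}_n$ and $\hat f_n$ minimises $|\cdot|_{BV}$ over the feasible set, $|\hat f_n|_{BV}\le|f|_{BV}$, so $|\hat f_n-f|_{BV}\le|\hat f_n|_{BV}+|f|_{BV}\le 2|f|_{BV}$. For the $L^1$ part I would use that both functions are supported in the unit cube $[0,1]^d$, of Lebesgue measure one, so $\|\cdot\|_{L^1}\le\|\cdot\|_{L^\infty}$ there; hence $\|\hat f_n-f\|_{L^1}\le\|\hat f_n\|_{L^\infty}+\|f\|_{L^\infty}\le\log n+\|f\|_{L^\infty}$. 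Adding the two contributions gives exactly the stated bound $\|f\|_{L^\infty}+2|f|_{BV}+\log n$.
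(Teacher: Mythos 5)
Your proof is correct and follows essentially the same route as the paper: the identical coarse/fine-scale split of the $B^{-d/2-\beta}_{\infty,\infty}$ norm, with the data-fidelity constraint plus the event $\mathcal{A}_n$ giving the $2\gamma_n$ bound on scales in $\Omega_n$, and the crude $\|\psi_{j,k,e}\|_{L^1}\lesssim 2^{-jd/2}$ bound giving the $n^{-1/2}$ term on finer scales (your explicit check that $f$ is feasible, so that $\hat f_n$ is a genuine minimiser, is a detail the paper glosses over). For part (ii) the paper merely cites the analogous proposition in~\cite{mTV}, and your minimality-plus-triangle-inequality argument together with $\|\cdot\|_{L^1}\leq\|\cdot\|_{L^\infty}$ on the unit cube is exactly that standard argument.
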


\begin{proof}
 For part $(i)$, the definition of the Besov $B^{-d/2-\beta}_{\infty,\infty}$ norm in terms of wavelet coefficients (see Section~\ref{sect:Notation}) yields
 \begin{align*}
 \|\hat{f}_n-f\|_{B^{-d/2-\beta}_{\infty,\infty}}&=\max_{(j,k,e)\in\Omega}2^{-\beta j}|\langle \psi_{j,k,e},\hat{f}_n-f\rangle|
 \\
 &\leq \max_{(j,k,e)\in\Omega_n}2^{-\beta j}|\langle \psi_{j,k,e},\hat{f}_n-f\rangle|+\max_{(j,k,e)\notin\Omega_n}2^{-\beta j}|\langle \psi_{j,k,e},\hat{f}_n-f\rangle|
 \\
 &\leq\max_{(j,k,e)\in\Omega_n}2^{-\beta j}|\kappa_j^{-1}\langle T^* u_{j,k,e},\hat{f}_n-f\rangle|+C\max_{(j,k,e)\notin\Omega_n}2^{-\beta j}\, \|\psi_{j,k,e}\|_{L^1}\|\hat{f}_n-f\|_{L^{\infty}}
 \\
 &\leq \max_{(j,k,e)\in\Omega_n}|\langle u_{j,k,e}, T\hat{f}_n-Tf\rangle|+C\frac{\|\hat{f}_n-f\|_{L^{\infty}}}{\sqrt{n}},
 \end{align*}
 where we used that $\kappa_j=2^{-j\beta}$ and that $\|\psi_{j,k,e}\|_{L^1}\leq C\, 2^{-jd/2}\|\psi_{j,k,e}\|_{L^2}$ for Daubechies wavelets (which are supported on a compact set). The numerator in the second term can be bounded by $\|f\|_{L^{\infty}}+\log n$ by construction of $\hat{f}_n$, while 
the first term can be bounded as
\begin{align*}
 \max_{(j,k,e)\in\Omega_n}\big|\langle &u_{j,k,e},T\hat{f}_n-Tf\rangle\big|
 \\
 &\leq \underbrace{\max_{(j,k,e)\in\Omega_n}\big|\langle u_{j,k,e},T\hat{f}_n\rangle-\langle u_{j,k,e},dY\rangle\big|}_{\leq\gamma_n}+\max_{(j,k,e)\in\Omega_n}\big|\langle u_{j,k,e},Tf\rangle- \langle u_{j,k,e},dY\rangle\big|
 \\
 &\leq \gamma_n+\max_{(j,k,e)\in\Omega_n}\frac{\sigma}{\sqrt{n}}\bigg|\int_{\M}u_{j,k,e}(x)\, dW(x)\bigg|\leq 2\gamma_n
\end{align*}
conditionally on $\mathcal{A}_n$, where in the second inequality we used the definition of $\hat{f}_n$. This completes the proof of $(i)$. 
The proof of $(ii)$ is analogous to the proof of Proposition 4 in~\cite{mTV}, so we do not reproduce it here.
\end{proof}

\begin{proof}[Proof of part a) of Theorem~\ref{results:Main_thm}]
We prove the claim of part a) of Theorem~\ref{results:Main_thm} conditionally on the event $\mathcal{A}_n$ in~\eqref{sketch:GoodEvent}, which by Proposition~\ref{app:GoodEvent} happens with probability $\mathbb{P}(\mathcal{A}_n)\geq 1-(\#\Omega_n)^{1-\kappa^2}$.
\\
Consider first the case $d\geq 2$, which gives $q^*:=1+2/(d+2\beta)\leq 2$. In this case, Proposition~\ref{ParticIntd=1} gives the interpolation 
inequality
 \begin{equation}
   \|\hat{f}_{n}-f\|_{L^{q}}\leq C\|\hat{f}_{n}-f\|_{\besov}^{\frac{2}{d+2\beta+2}}\|\hat{f}_{n}-f\|_{BV}^{\frac{d+2\beta}{d+2\beta+2}}\label{pf:middle1}
 \end{equation}
 for $q\leq 1+2/(d+2\beta)$. 
 Conditionally on $\mathcal{A}_n$ and for $n\geq e^L$, Proposition~\ref{proofs:AuxLemma} gives bounds for the terms in the right-hand side of~\eqref{pf:middle1}, and putting the last three equations together then yields
\begin{align*}
 \|\hat{f}_{n}-f\|_{L^q}&\leq C \bigg(\gamma_n+C\frac{\|f\|_{L^{\infty}}+\log n}{\sqrt{n}}\bigg)^{\frac{2}{d+2\beta+2}}\big(\|f\|_{L^{\infty}}+2|f|_{BV}+\log n\big)^{\frac{d+2\beta}{d+2\beta+2}}
 \\
 &\leq Cn^{-\frac{1}{d+2\beta+2}}\big(\sqrt{\log \#\Omega_n}+L+\log n\big)^{\frac{2}{d+2\beta+2}}\big(L+\log n\big)^{\frac{d+2\beta}{d+2\beta+2}}
 \\
 &\leq C\, n^{-\frac{1}{d+2\beta+2}}\, \log n
\end{align*}
using that $f\in BV_L$. Since $\#\Omega_n$ grows linearly in $n$ (recall Section~\ref{sect:Notation}), the claim follows.

For the case when $d=1$ and $\beta\geq 1/2$, we have $q^*\leq 2$ and the argument goes through as above.

Finally, the case $d=1$ and $\beta<1/2$ requires a special treatment, since then $q^*>2$. We use part b) of Proposition~\ref{ParticIntd=1}, which gives
 \begin{equation}
  \|\hat{f}_{n}-f\|_{L^q}\leq C (\log n )\, \|\hat{f}_{n}-f\|_{B^{-d/2-\beta}_{\infty,\infty}}^{\frac{2}{d+2\beta+2}}\|\hat{f}_{n}-f\|_{BV}^{\frac{d+2\beta}{d+2\beta+2}}+C\, n^{-1}\, \|\hat{f}_{n}-f\|_{L^{\infty}}^{\frac{2}{d+2\beta+2}}\, \|\hat{f}_{n}-f\|_{BV}^{\frac{d+2\beta}{d+2\beta+2}}
 \end{equation}
 for a constant $C>0$ and any $q\leq q^*$. 
 Conditionally on $\mathcal{A}_n$, we bound the terms in the right-hand side by Proposition~\ref{proofs:AuxLemma}, which for $n\geq e^L$ yields
 \begin{equation*}
 \|\hat{f}_n-f\|_{L^q}\leq C\, n^{-\frac{1}{d+2\beta+2}}\, (\log n)^2 +C\, n^{-1}\, \log n,
 \end{equation*}
 which gives the claim.
 
 We have proved the claim for the $L^q$-risk with $q\leq q^*:=1+2/(d+2\beta)$. For larger $q$, we use H\"older's inequality between the $L^{1+2/(d+2\beta)}$ and the $L^{\infty}$-risk, which gives the bound
 \begin{equation*}
 \|\hat{f}_n-f\|_{L^q}\leq \|\hat{f}_n-f\|_{L^{1+2/(d+2\beta)}}^{\frac{d+2\beta+2}{q(d+2\beta)}} \|\hat{f}_n-f\|_{L^{\infty}}^{1-\frac{d+2\beta+2}{q(d+2\beta)}}\leq C\, n^{-\frac{1}{q(d+2\beta)}}\,  (\log n)^{3-\min\{d,2\}}
 \end{equation*}
 for $q\geq 1+2/(d+2\beta)$. This completes the proof.
\end{proof}

\begin{proof}[Proof of part b) of Theorem~\ref{results:Main_thm}]
It follows from the convergence conditionally on $\mathcal{A}_n$ proved in part a) of the theorem. We omit the proof, as it is analogous to the proof of part b) of Theorem 1 in~\cite{mTV}.
 \end{proof}

\subsubsection{Proof of Theorem~\ref{results:Thm2}}

Here we prove Theorem~\ref{results:Thm2} by showing that the minimax rate over the smaller set
\begin{equation*}
(B^1_{1,1}\cap L^{\infty})_L:=\{g\in B^1_{1,1}\, |\, \|g\|_{L^{\infty}}\leq L, \ \|g\|_{B^1_{1,1}}\leq L, \ \textup{supp } g\subseteq[0,1]^d\}\subset BV_L
\end{equation*} 
with respect to the $L^q$-risk, $q\in[1,\infty)$, is not faster than $n^{-\vartheta_{q,\beta}}$. 
The proof of this is well-known in the dense case $q<1+2/(d+2\beta)$, where $\vartheta_{q,\beta}=\frac{1}{d+2\beta+2}$: it can be found e.g.~in Chapter 10 of~\cite{hardle2012wavelets} for $d=1$ and $T=id$, so we do not reproduce it here. Indeed, the generalization from $d=1$ to $d\geq 2$ is trivial. Concerning the difference between $T=id$ and general $T$, we show below how to adapt the construction of the alternatives in the case $q\geq 1+2/(d+2\beta)$, which indicates how to proceed in the dense regime (see e.g.~Theorem 3 in~\cite{cavalier2011inverse} for a different strategy for computing the minimax risk in inverse problems for the $L^2$-risk).

On the other hand, we have not found a lower bound in the literature for the regime $q\geq 1+2/(d+2\beta)$: only the construction in~\cite{mTV} for the particular case $\beta=0$ deals with that regime. Here we modify that proof and give a lower bound for general $\beta\geq 0$.

\begin{proof}[Proof of Theorem~\ref{results:Thm2}]

The proof follows the proof of Theorem 2 in~\cite{mTV} closely.
\\
\textbf{Construction of alternatives: } In the proof of Theorem 2 in~\cite{mTV}, a set of alternatives $\mathcal{G}:=\{g^{\epsilon}\, |\, \epsilon\in \{-1,+1\}^{S_j}\}$ is constructed such that
\begin{equation*}
g^{\epsilon}:=\gamma\sum_{(k,e)\in R_j}\epsilon_{k,e}\psi_{j,k,e},
\end{equation*}
where $\gamma\asymp 2^{-jd/2}$ is the signal strength, $\psi_{j,k,e}$ are orthonormal Daubechies wavelets, and $(k,e)\in R_j\subseteq \{0,\ldots,2^j-1\}^d\times E_j$, $E_j=\{0,1\}^d\backslash\{0\}$, are indices such that $S_j=\# R_j=2^{j(d-1)}$. These functions are chosen to satisfy $\|g^{\epsilon}\|_{B^{1}_{1,1}}\leq L$, $\|g^{\epsilon}\|_{L^{\infty}}\leq L$ and
\begin{align}
\delta:=\inf_{\epsilon\neq\epsilon'}\|g^{\epsilon}-g^{\epsilon'}\|_{L^q}=2\|\gamma\psi_{j,k,e}\|_{L^q}=2\gamma\, 2^{jd(\frac{1}{2}-\frac{1}{q})}\, \|\psi\|_{L^q}\asymp 2^{-jd/q} \label{Assouaddelta}.
\end{align}
\textbf{Lower bound: } We use now Assouad's lemma for lower bounding the $L^q$-risk over $(B^1_{1,1}\cap L^{\infty})_L$. We reproduce the claim (Lemma 10.2 in~\cite{hardle2012wavelets}) for completeness. 
\begin{lemma}\label{lemma:Assouad}
For $\epsilon\in\{-1,+1\}^{S_j}$ and $(k,e)\in R_j$, define $\epsilon_{*k,e}:=(\epsilon_{(k_1,e_1)}',\ldots,\epsilon_{(k_{S_j},e_{S_j})}')$, where
\begin{equation*}
\epsilon_{(k'e')}'=\begin{cases}
\epsilon_{(k,e)}\ & \textup{ if } (k',e')\neq (k,e),
\\
-\epsilon_{(k,e)}\ & \textup{ if } (k',e')= (k,e).
\end{cases}
\end{equation*}
Assume there exist constants $\lambda,p_0>0$ such that
\begin{equation}
\mathbb{P}_{Tg^{\epsilon}}\big(LR(Tg^{\epsilon_{*k,e}},Tg^{\epsilon})>e^{-\lambda}\big)\geq p_0, \ \ \forall\epsilon, \ \forall n,\label{eq:lemmaAssouad}
\end{equation}
where $\mathbb{P}_{Tg^{\epsilon}}$ denotes the probability with respect to observations drawn from $Tg^{\epsilon}$ in the white noise model~\eqref{results:MODEL}, and $LR(Tg^{\epsilon_{*k,e}},Tg^{\epsilon})$ denotes the likelihood ratio between the observations associated to $Tg^{\epsilon_{*k,e}}$ and $Tg^{\epsilon}$. Then any estimator $\hat{f}$ based on observations~\eqref{results:MODEL} satisfies
\begin{equation*}
\sup_{g^{\epsilon}\in \mathcal{G}}\mathbb{E}_{Tg^{\epsilon}}\|\hat{f}-g^{\epsilon}\|_{L^q}\geq \frac{e^{-\lambda}\, p_0}{2}\, \delta\, S_j^{1/q},
\end{equation*}
where $\delta$ is defined in~\eqref{Assouaddelta}.
\end{lemma}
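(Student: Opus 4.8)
The plan is to run the standard Assouad (hypercube) argument tailored to the $L^q$-norm loss. First I would bound the supremum over $\mathcal{G}$ from below by the average over the uniform prior on the sign vectors, $\sup_{g^{\epsilon}\in\mathcal{G}}\mathbb{E}_{Tg^{\epsilon}}\|\hat f-g^{\epsilon}\|_{L^q}\geq 2^{-S_j}\sum_{\epsilon}\mathbb{E}_{Tg^{\epsilon}}\|\hat f-g^{\epsilon}\|_{L^q}$. Next, for each $(k,e)\in R_j$ I would define a bit estimator $\hat\epsilon_{k,e}\in\{-1,+1\}$ by comparing $\hat f$, restricted to the support of $\psi_{j,k,e}$, with the two competing profiles $\pm\gamma\psi_{j,k,e}$ and selecting the closer one in $L^q$. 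Since the wavelets entering the construction have (essentially) disjoint supports, $g^{\epsilon}$ and $g^{\epsilon_{*k,e}}$ differ only on the support of $\psi_{j,k,e}$, where their $L^q$ distance is exactly $\delta$ by \eqref{Assouaddelta}; a triangle inequality on that block then shows that each misclassified bit contributes at least $(\delta/2)^q$ to $\|\hat f-g^{\epsilon}\|_{L^q}^q$. Summing over the disjoint blocks gives the pointwise bound $\|\hat f-g^{\epsilon}\|_{L^q}\geq \tfrac{\delta}{2}\,H(\hat\epsilon,\epsilon)^{1/q}$, with $H$ the Hamming distance.

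The next step addresses the fact that the loss is the first power of the norm rather than the $q$-th power, which is what decomposes additively over the bits. Here I would use the elementary inequality $H^{1/q}\geq S_j^{1/q-1}H$, valid for $0\leq H\leq S_j$ and $q\geq 1$, to linearize in the number of wrong bits: taking expectations, $\mathbb{E}_{Tg^{\epsilon}}\|\hat f-g^{\epsilon}\|_{L^q}\geq \tfrac{\delta}{2}\,S_j^{1/q-1}\sum_{(k,e)\in R_j}\mathbb{P}_{Tg^{\epsilon}}(\hat\epsilon_{k,e}\neq\epsilon_{k,e})$. It then suffices to bound the averaged per-coordinate error from below by a constant.

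For that, I would fix a coordinate $(k,e)$, pair each $\epsilon$ with $\epsilon_{*k,e}$, and use the two-point testing bound that the sum of the two error probabilities of any $\{-1,+1\}$-valued decision is at least the affinity $\int\min(dP_{Tg^{\epsilon}},dP_{Tg^{\epsilon_{*k,e}}})$. The likelihood-ratio hypothesis \eqref{eq:lemmaAssouad} converts into a lower bound on this affinity through $\min(1,LR)\geq e^{-\lambda}\mathbf{1}\{LR>e^{-\lambda}\}$, giving affinity $\geq e^{-\lambda}p_0$. Averaging over the $2^{S_j}$ sign vectors yields $2^{-S_j}\sum_{\epsilon}\mathbb{P}_{Tg^{\epsilon}}(\hat\epsilon_{k,e}\neq\epsilon_{k,e})\geq \tfrac12 e^{-\lambda}p_0$, hence $2^{-S_j}\sum_{\epsilon}\mathbb{E}_{Tg^{\epsilon}}[H]\geq \tfrac{S_j}{2}e^{-\lambda}p_0$. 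Chaining the three bounds produces a lower bound of the form $c\,e^{-\lambda}p_0\,\delta\,S_j^{1/q}$, i.e. the asserted estimate up to the universal constant.

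The main obstacle is geometric rather than probabilistic: Daubechies wavelets at a fixed scale overlap a bounded number of neighbours, so the clean block-wise decomposition of the $L^q$-norm used above is not literally true and must be justified through the specific spacing of the indices in $R_j$ supplied by the construction of \cite{mTV} (this is precisely why $\delta$ in \eqref{Assouaddelta} is the \emph{single}-flip separation). The only genuinely $L^q$-specific subtlety is the passage from the $q$-th power to the first power of the norm, which is handled cleanly by $H^{1/q}\geq S_j^{1/q-1}H$; everything else is textbook Assouad bookkeeping.
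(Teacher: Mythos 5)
Your proposal is the standard Assouad argument, and structurally it is sound; note that the paper itself gives no proof of this lemma at all --- it is quoted as Lemma 10.2 of \cite{hardle2012wavelets} ``for completeness'' --- so your write-up supplies an argument the paper only references. Each step you outline is valid: the passage from the supremum to the uniform average over the cube, the per-block bound (each misclassified bit forces an error of at least $\delta/2$ on its block, which requires disjointness of the supports of the $\psi_{j,k,e}$, $(k,e)\in R_j$; you correctly note that this comes from the spacing in the construction of \cite{mTV}, and it is in any case already implicit in the identity $\inf_{\epsilon\neq\epsilon'}\|g^{\epsilon}-g^{\epsilon'}\|_{L^q}=2\|\gamma\psi_{j,k,e}\|_{L^q}$ of \eqref{Assouaddelta}), the linearization $H^{1/q}\geq S_j^{1/q-1}H$, and the conversion of \eqref{eq:lemmaAssouad} into the affinity bound $\int\min(dP_{Tg^{\epsilon}},dP_{Tg^{\epsilon_{*k,e}}})\geq e^{-\lambda}p_0$ via $\min(1,LR)\geq e^{-\lambda}\mathbf{1}\{LR>e^{-\lambda}\}$.

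The only discrepancy is the constant: chaining your three bounds gives $\frac{e^{-\lambda}p_0}{4}\,\delta\,S_j^{1/q}$, not the asserted $\frac{e^{-\lambda}p_0}{2}\,\delta\,S_j^{1/q}$, because the reduction to bit estimators costs a factor $2$ (the $\delta/2$ in the per-block bound) \emph{in addition to} the factor $1/2$ from pairing $\epsilon$ with $\epsilon_{*k,e}$. This is immaterial for Theorem~\ref{results:Thm2}, where only the rate matters, but the stated constant can be obtained by removing the bit estimators from your argument. Indeed, with $B_{k,e}$ denoting the support of $\psi_{j,k,e}$, the power-mean inequality gives
\begin{equation*}
\|\hat f-g^{\epsilon}\|_{L^q}\geq\bigg(\sum_{(k,e)\in R_j}\|\hat f-g^{\epsilon}\|_{L^q(B_{k,e})}^q\bigg)^{1/q}\geq S_j^{1/q-1}\sum_{(k,e)\in R_j}\|\hat f-g^{\epsilon}\|_{L^q(B_{k,e})},
\end{equation*}
and for each fixed $(k,e)$ and each pair $(\epsilon,\epsilon_{*k,e})$ the pointwise triangle inequality $\|\hat f-g^{\epsilon}\|_{L^q(B_{k,e})}+\|\hat f-g^{\epsilon_{*k,e}}\|_{L^q(B_{k,e})}\geq\|g^{\epsilon}-g^{\epsilon_{*k,e}}\|_{L^q}=\delta$, integrated against $\min\big(dP_{Tg^{\epsilon}},dP_{Tg^{\epsilon_{*k,e}}}\big)$, yields
\begin{equation*}
\mathbb{E}_{Tg^{\epsilon}}\|\hat f-g^{\epsilon}\|_{L^q(B_{k,e})}+\mathbb{E}_{Tg^{\epsilon_{*k,e}}}\|\hat f-g^{\epsilon_{*k,e}}\|_{L^q(B_{k,e})}\geq\delta\int\min\big(dP_{Tg^{\epsilon}},dP_{Tg^{\epsilon_{*k,e}}}\big)\geq\delta\,e^{-\lambda}p_0.
\end{equation*}
Averaging over the $2^{S_j-1}$ pairs and summing over the $S_j$ blocks then recovers exactly $\frac{e^{-\lambda}p_0}{2}\,\delta\,S_j^{1/q}$, i.e.\ the factor $2$ lost in the bit-estimator reduction is regained.
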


\textbf{Verification of~\eqref{eq:lemmaAssouad}: }
With the same argument as the proof of Theorem 2 in~\cite{mTV}, condition~\eqref{eq:lemmaAssouad} holds provided that the Kullback-Leibler divergence between observations from two alternatives satisfies $K(dP_{Tg^{\epsilon_{*k,e}}},dP_{Tg^{\epsilon}})\leq c$ for a small enough constant $c>0$. A standard computation gives
\begin{equation*}
K(dP_{Tg^{\epsilon_{*k,e}}},dP_{Tg^{\epsilon}})=\frac{n}{2\sigma^2}\|Tg^{\epsilon_{*k,e}}-Tg^{\epsilon}\|_{L^2}^2=\frac{n\gamma^2}{2\sigma^2}\|T\psi_{j,k,e}\|_{L^2}^2\leq \frac{n\gamma^2\, 2^{-2j\beta}}{2\sigma^2}
\end{equation*}
using~\eqref{lowBound:OP}, so choosing $\gamma^2\asymp 2^{-jd}\asymp n^{-\frac{d}{d+2\beta}}$ gives~\eqref{eq:lemmaAssouad}. 
\\

\textbf{Application of Lemma~\ref{lemma:Assouad}: } The conclusion of the lemma applies, and we can lower bound the $L^q$-risk over the class $(B^1_{1,1}\cap L^{\infty})_L$ by the risk over $\mathcal{G}$, i.e.,
\begin{equation}
\sup_{f\in (B^1_{1,1}\cap L^{\infty})_L}\mathbb{E}_{Tf}\|\hat{f}-f\|_{L^q}\geq \sup_{g^{\epsilon}\in \mathcal{G}}\mathbb{E}_{Tg^{\epsilon}}\|\hat{f}-g^{\epsilon}\|_{L^q}\geq \frac{e^{-\lambda}\, p_0}{2}\, \delta\, 2^{j\Delta/q}\label{LowRisk1}
\end{equation}
for any estimator $\hat{f}$. Choosing as above $2^{j}\asymp n^{1/(d+2\beta)}$, the definition~\eqref{Assouaddelta} for $\delta$ gives the bound
\begin{align*}
\sup_{f\in (B^1_{1,1}\cap L^{\infty})_L}\mathbb{E}_{Tf}\|\hat{f}-f\|_{L^q}\geq c\, \delta\, 2^{j\Delta/q}\asymp  2^{-j(d-\Delta)/q} \asymp  n^{-\frac{1}{q(d+2\beta)}},
\end{align*}
which completes the proof.
\end{proof}

\subsection{Examples}\label{sect:Examp}

\subsubsection{Radon transform}\label{sect:ExampRad}

Due to its application in nondestructive imaging, in particular in medical applications, tomography is a very relevant inverse problem. While there are plenty of mathematical models for tomography, which mainly depend on the type of tomography and the geometry of the detector (see e.g.~Chapter 1 in~\cite{scherzer2009variational}), in this section we will exemplarily consider tomography modeled by the Radon transform. For simplicity we consider here the two dimensional case, in which the Radon transform of a function $g$ is given by its line integrals along different directions, see~\eqref{examples:Radon}.

Functions in the range of $T$ are supported on cylindrical sets of the form $\mathbb{M}=\mathbb{R}\times[0,2\pi)$. Moreover, the domain of $T$ consists of functions $g\in L^2(\Rd)$ whose Fourier transform satisfies $|\xi|^{-1/2}\mathcal{F}[g](\xi)\in L^2$, see~\cite{donoho1995nonlinear}. This is a condition on the low frequencies which essentially ensures that local averages remain reasonably small.

In this section we will show how to apply the estimation framework developed above to this type of inverse problems. For that, let $\{\psi_{j,k,e}\}$ denote a basis of Daubechies wavelets as described in Section~\ref{sect:Notation}. For $(j,k,e)\in\Omega$, define the vaguelettes by
\begin{equation}
u_{j,k,e}(r,\theta)=\frac{2^{-j/2}}{(2\pi)^2}\int_{\mathbb{R}}|\rho|\, \mathcal{F}[\psi_{j,k,e}](\rho\cos\theta,\rho\sin\theta)\, e^{ir\rho}\, d\rho.
\end{equation}
It is easy to verify directly (see e.g.~Chapter 2 in~\cite{natterer1986mathematics}) that the vaguelettes satisfy the equation
\begin{equation*}
T^*u_{j,k,e}=\kappa_j\, \psi_{j,k,e}
\end{equation*}
for generalized critical values $\kappa_j=2^{-j/2}$. Moreover,
\begin{equation*}
c_1\leq \|u_{j,k,e}\|_{L^2}\leq c_2 \ \ \ \forall (j,k,e)\in\Lambda,
\end{equation*}
for constants $c_1,c_2$ depending on $\psi_{0,0,e}$, see Section 3.3 in~\cite{donoho1995nonlinear} for a proof of this claim. Let us remark that the system $\{u_{j,k,e}\}$ is part of a WVD for $T$ (see~\cite{donoho1995nonlinear} for the details).

Altogether, the observations above imply that the Radon transform satisfies Assumption~\ref{ass:OP} with $\beta=1/2$ in dimension $d=2$. By Theorem~\ref{results:Thm2}, the multiscale total variation estimator~\eqref{result:EST2} is nearly minimax optimal for recovering a function $f\in BV_L$ from noisy Radon observations. We remark that the same analysis can be performed for the Radon transform in higher dimensions, in which case $\beta=(d-1)/2$, for the X-ray transform, with $\beta=1/2$ for any dimension~\citep{natterer1986mathematics}, as well as for other tomography operators, such as photoacoustic and thermoacoustic tomography (see e.g.~\cite{haltmeier2013inversion}).

\subsubsection{Convolution}\label{sect:ExampConv}

Let $T$ denote the convolution operator with a kernel $K\in L^1(\Rd)$, i.e.,
\begin{equation*}
Tg(x):=\int_{\Rd} K(x-y)g(y)\, dy.
\end{equation*}
We let $\M=\Rd$, and by Young's inequality $T$ is a bounded operator from $\mathcal{D}(T)=L^2(\Rd)$ to itself whose operator norm equals $\|K\|_{L^1}$. 
The inverse problem~\eqref{results:MODEL} with a convolution operator $T$ is a model for a myriad of applications in image and signal processing, including microscopy and astronomy models (see e.g.~\cite{bertero2009image}). The problem of recovering a signal $f$ from noisy measurement of its convolution $Tf$ is hence of extreme practical relevance. In this section we show that the multiscale TV-estimator~\eqref{result:EST2} solves this problem in a minimax optimal sense. 

For that, we need to impose regularity conditions on $T$, which naturally have the form of a decay condition on the Fourier transform of $K$. In particular, we assume that the kernel $K$ satisfies 
\begin{equation}
a_1\, (1+|\xi|^2)^{-\beta/2}\leq |\mathcal{F}[K](\xi)|\leq a_2\, (1+|\xi|^2)^{-\beta/2} \ \ \forall\xi\in\mathbb{R}^d\label{IP:kernelDecay}
\end{equation}
for constants $a_1,a_2\geq 0$ and some $\beta\geq 0$. Given a basis of Daubechies wavelets $\{\psi_{j,k,e}\}$ like that in Section~\ref{sect:Notation} with $\min\{R,D\}>\max\{1,d/2+\beta\}$, define the system of functions
\begin{equation}
u_{j,k,e}(x):=2^{j(d/2-\beta)}\mathcal{F}^{-1}\bigg[\frac{\mathcal{F}[\psi_{0,0,e}](\cdot)}{\mathcal{F}[K](-2^j\cdot)}\bigg]\big(2^jx-k\big)\label{IP:DeconvDiction}
\end{equation}
indexed by the set $\Omega$ in~\eqref{results:omega}. These functions satisfy the following relations
\begin{align*}
& T^*u_{j,k,e}=\kappa_j\, \psi_{j,k,e} \ \ \textup{ where } \kappa_j=2^{-j\beta},
\\
& c_1\leq \|u_{j,k,e}\|_{L^2}\leq c_2 \ \ \forall (j,k,e)\in\Omega,
\end{align*}
where we can choose $c_1=\min_{e\in \{0,1\}^d}\|(-\Delta)^{\beta/2}\psi_{0,0,e}\|_{L^2}$ and $c_2=\max_{e\in\{0,1\}^d}\|\psi_{0,0,e}\|_{H^{\beta}}$ (see Proposition~\ref{aux:Convolution} for the proof). These results show that the convolution operator $T$ under the assumptions above satisfies Assumption~\ref{ass:OP}. By Theorem~\ref{results:Thm2} we conclude that the multiscale TV-estimator is minimax optimal for estimating functions $f\in BV_L$, up to logarithmic factors.

\subsection{Nonparametric inverse regression model}\label{Rem_Discr}

So far we have discussed the estimator $\hat{f}_n$ based on observations from the white noise model~\eqref{results:MODEL}. In practice, however, one naturally has access to discretely sampled data, which makes it more realistic to model the observations with the nonparametric regression model
\begin{equation}
Y_i=Tf(x_i)+\sigma\, \epsilon_i, \ \ \ x_i\in\Gamma_n, \ \ i=1,\ldots,n.\label{Res:RegM}
\end{equation}
 Here we assume that $n=m^d$ for some $m\in\mathbb{N}$, and that the design points belong to an equidistant grid
\begin{equation*}
 \Gamma_n:=\bigg\{\bigg(\frac{k_1}{m},\cdots,\frac{k_d}{m}\bigg)\, \bigg|\, k_i\in\{1,\ldots,m\},\ i=1,\ldots,d\bigg\}.\label{Gamman}
\end{equation*}
 Of course, different grids may be used, depending on the operator $T$ and the domain $\M$ under consideration. For simplicity of the analysis, we assume in this section that $\M=(0,1)^d$. This is the case when $T$ is the identity operator, a suitable convolution operator, or integration, to mention just a few examples. In~\eqref{Res:RegM}, $\epsilon_i$ are independent standard normal random variables, and $\sigma>0$ plays the role of the standard deviation of the noise.

Given observations~\eqref{Res:RegM}, our goal is to estimate the function $f$. We do so by discretizing our construction of the multiscale TV-estimator from Definition~\ref{results:definition}. 
Let $\{u_{\omega}^n \, |\, \omega\in\Omega_n\}$ be a dictionary of discretized vaguelettes, i.e., each $u_{\omega}^n$ is a vector of $n$ values
\begin{equation*}
\big(u_{\omega}^n\big)_i=n^{-1/2}\,u_{\omega}(x_i)\ \ \textup{ for } i=1,\ldots,n, \ \ x_i\in\Gamma_n,
\end{equation*}
which are the evaluations of the vaguelette $u_{\omega}$ at the grid points. The scaling factor $n^{-1/2}$ is chosen so that
\begin{equation*}
\sum_{x_i\in\Gamma_n}\big|\big(u_{\omega}^n\big)_i \big|^2\rightarrow \|u_{\omega}\|_{L^2}^2=1 \ \ \ \textup{  as } \ \ n\rightarrow\infty,
\end{equation*}
for any $\omega\in\Omega_n$, i.e., so that the vectors $u_{\omega}^n$ have asymptotically unit norm in an $L^2$ sense.

In this setting, the multiscale TV-estimator takes the form
\begin{equation}
 \hat{f}_{D}\in\underset{g\in \mathcal{F}_n}{\textup{ argmin }}|g|_{BV}\ \textup{ subject to } \ \max_{\omega\in\Omega_n}\big|\sum_{x_i\in\Gamma_n}\big(u_{\omega}^n\big)_i\big(Tg(x_i)-Y_i\big)\big|\leq\kappa\, c_2\, \sigma\, \sqrt{2\log\#\Omega_n},\label{Disc_Est}
\end{equation}
where $c_2>0$ is the upper frame constant for the continuous vaguelettes in Assumption~\ref{ass:OP}.

We can now analyze the estimator~\eqref{Disc_Est} following the same strategy as we did in the white noise model. The only difference will be that, above, we related the constraint on the vaguelette coefficients to the Besov $B^{-d/2-\beta}_{\infty,\infty}$ norm. Since here we only have access to the \textit{discretized} vaguelette coefficients, there is an additional discretization error caused by the approximation of the vaguelette coefficients by their discretized counterparts. That error is given by
\begin{equation}
\delta_n:=\sup_{g\in\mathcal{F}_n}\bigg|\int_{[0,1]^d}u_{\omega}(x)Tg(x)\, dx- n^{-1}\sum_{i=1}^nu_{\omega}(x_i)Tg(x_i)\bigg|.\label{discr_error}
\end{equation}
Proceeding as in the proof of Proposition~\ref{proofs:AuxLemma}, we see that $\hat{f}_D$ satisfies the error bounds
\begin{align*}
(i) & \ \ \|\hat{f}_D-f\|_{B^{-d/2-\beta}_{\infty,\infty}}\leq C\, \gamma_n+C\frac{\|f\|_{L^{\infty}}+\max\{L,\log n\}}{\sqrt{n}}+2\delta_n,
  \\
  (ii) & \ \ \|\hat{f}_D-f\|_{BV}\leq \|f\|_{L^{\infty}}+2|f|_{BV}+\max\{L,\log n\},
\end{align*}
conditionally on the event $\mathcal{A}_n$ in~\eqref{sketch:GoodEvent}. Following the proof of Theorem~\ref{results:Main_thm}, we get the result
\begin{equation*}
    \sup_{f\in BV_L}\mathbb{E}\big[\|\hat{f}_{D}-f\|_{L^q}\big]\leq C\, \max\{n^{-1/2},\delta_n\}^{2\vartheta_{q,\beta}}\, (\log n)^{3-\min\{d,2\}}
 \end{equation*}
for $q\in[1,\infty)$ and $n$ large enough. Here we have the following trade-off: if $\delta_n$ is of smaller order that $n^{-1/2}$, then $\hat{f}_D$ attains the same rate as the multiscale TV-estimator based on observations from the white noise model. On the other hand, if $\delta_n$ is of bigger order than $n^{-1/2}$, the discretization error dominates and $\hat{f}_D$ performs worse than $\hat{f}_n$. The different performance of the multiscale TV-estimator in the white noise and in the nonparametric regression models hence boils down to a purely approximation theoretic question. 

It remains now to bound the discretization error $\delta_n$. For that, notice that it is entirely determined by the smoothness of $u_{\omega}\, Tg$. Recall that $g\in BV\cap L^{\infty}$ and that $T$ is a smoothing operator. 
Consider the following examples.
\begin{itemize}
\item[1)] Let $T$ be the identity operator. Then $u_{\omega}=\psi_{\omega}$ is a smooth wavelet basis, and $Tg\in BV\cap L^{\infty}$. Consequently, the product $u_{\omega}\, Tg$ is at most a function of bounded variation, for which we have $\delta_n=O(n^{-1/d})$ (see e.g.~Chapter 5 in~\cite{Evans}). In this case, the discretization error is of lower order for $d=1,2$, while it dominates for $d\geq 3$. 
\item[2)] In particular cases, for $T=id$, the error in $d\geq 3$ can be improved. For instance, if $g$ is a piecewise constant function and if $u_{\omega}$ is smooth enough and has vanishing moments. In that case, the discretization error can be of smaller order due to the vanishing moments of $\psi_{\omega}$. We do not pursue this idea further.
\item[3)] If $T$ is a convolution operator as in Section~\ref{sect:ExampConv}, then by Fourier inversion we can show that $u_{\omega}$ is continuous. Moreover, if the kernel decays fast enough, $Tg$ will be a continuous function as well, and so will be $u_{\omega}\, Tg$. Hence we have the same discretization error $\delta_n=O(n^{-1/d})$ as above. There is nevertheless an important caveat here: as opposed to wavelets, vaguelettes do not have in general vanishing moments. Consequently, this error cannot be improved by assuming that $Tg$ is e.g.~piecewise constant.
\end{itemize}

We have argued that the difference between the multiscale TV-estimator in the white noise and the nonparametric inverse regression models arises from a discretization error. In particular, the error appears in the convergence rate in the nonparametric regression model, eventually making it slower. Importantly, for $d=2$ the error behaves as $\delta_n=O(n^{-1/2})$, and so the multiscale estimator attains the optimal convergence rate $n^{-\vartheta_{q,\beta}}$ for imaging problems in the discretized model~\eqref{Res:RegM}. 

 More generally, the difference between the white noise and the nonparametric inverse problem models can be measured with the theory of asymptotic equivalence. While that theory is well understood when $T=id$ (\cite{brown1996},~\cite{reiss2008}), there are considerably fewer results for general operators $T$ (see~\cite{grama1998asymptotic} and~\cite{meister2011asymptotic}). In particular,~\cite{meister2011asymptotic} proves asymptotic equivalence in a functional linear regression model provided that the unknown function is suitably smooth, which is reminiscent of our analysis above to control $\delta_n$ based on the smoothness of $Tg$.

\section{Auxiliary analytical results}\label{sect:Proofs}


For simplicity, we prove the two parts of Proposition~\ref{ParticIntd=1} separately. 
They rely on an interpolation inequality proved by~\cite{cohen2003harmonic}, which we reproduce here.
\begin{theorem}[Theorem 1.5 in~\cite{cohen2003harmonic}]\label{thm:Cohen}
Let $s\in\mathbb{R}$ and $1<p\leq\infty$, and assume that $\gamma:=1+(s-1)p'/d$ satisfies either $\gamma>1$ or $\gamma<1-1/d$, where $p'$ denotes the H\"older conjugate of $p$. Then for any $0<\theta<1$ such that
\begin{equation*}
\frac{1}{q}=\frac{1-\theta}{p}+\theta, \ \ t=(1-\theta)s+\theta
\end{equation*}
we have the inequality
\begin{equation}
\|g\|_{B^{t}_{q,q}}\leq C\, \|g\|_{B^{s}_{p,p}}^{1-\theta}\|g\|_{BV}^{\theta}\label{sketch:Cohen}
\end{equation}
for any function $g\in BV\cap B^{s}_{p,p}$ and a constant $C>0$ depending on $p,q,s$ and $d$ only.
\end{theorem}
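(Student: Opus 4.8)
The plan is to prove the interpolation inequality \eqref{sketch:Cohen} by passing to wavelet coefficients, which turns all three function-space norms into weighted sequence norms and reduces the statement to a multiplicative interpolation inequality between sequence spaces. First I would fix a sufficiently smooth, compactly supported wavelet basis $\{\psi_\lambda\}$, $\lambda=(j,k,e)$, and record the standard wavelet characterizations $\|g\|_{B^s_{p,p}}^p\asymp\sum_j 2^{jp(s+d/2-d/p)}\sum_{k,e}|\langle g,\psi_\lambda\rangle|^p$, and similarly for $B^t_{q,q}$. The prescribed relations $1/q=(1-\theta)/p+\theta$ and $t=(1-\theta)s+\theta$ are precisely the interpolation exponents for the couple whose endpoints are $B^s_{p,p}$ and a space of smoothness $1$ and integrability $1$; the ``$1$'' in both formulas is exactly the $BV$-scaling. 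Thus the weighted structures on the two sides are compatible, and the problem collapses to controlling the coefficient sequence of $g$ simultaneously by its $B^s_{p,p}$ norm and by a surrogate adapted to $BV$.

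The essential input is the structural estimate for the wavelet coefficients of a $BV$ function: with $L^1$-normalized wavelets the coefficients $(d_\lambda)$ of $g$ lie in weak $\ell^1$, with $\|(d_\lambda)\|_{\ell^{1,\infty}}\le C|g|_{BV}$, equivalently the decreasing rearrangement obeys $d_n^*\le C|g|_{BV}/n$. I would take this as the replacement for the naive (and \emph{false}) embedding $BV\hookrightarrow B^1_{1,1}$. Combining this weak-$\ell^1$ control with the $\ell^p$ control furnished by the finite $B^s_{p,p}$ norm, I would estimate the $B^t_{q,q}$ norm by interpolating the two sequence bounds: writing each contribution as $|d_\lambda|^q=|d_\lambda|^{(1-\theta)q}\cdot|d_\lambda|^{\theta q}$ with exponents chosen so that the first factor pairs with the $\ell^p$ bound and the second with the $\ell^{1,\infty}$ bound, and summing via a rearrangement argument (equivalently, a $K$-functional computation for the couple $(\ell^{1,\infty},\ell^p)$). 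The hypotheses $\gamma>1$ or $\gamma<1-1/d$, which amount to $s>1$ or $s<1/p$ and exclude the band $1/p\le s\le 1$, guarantee that we stay strictly off the critical line where the $BV$-scaling and the $B^s_{p,p}$-scaling coincide; away from it the dyadic weights form a convergent geometric series, the couple is non-degenerate, and the diagonal third index $q$ on the target is attained rather than only $q=\infty$.

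The main obstacle is the weak-$\ell^1$ coefficient estimate itself, which is the genuinely hard, geometric part and the reason this result is not a soft consequence of abstract interpolation. Its proof rests on the measure-theoretic structure of $BV$ functions: one invokes the coarea formula to reduce to level sets, bounds the perimeter of level sets by the total variation, and then performs a careful dyadic count of how many wavelets at each scale overlap the reduced boundary, using an isoperimetric/packing argument to control the number of coefficients above each threshold. In the interest of a clean exposition I would quote this estimate from~\cite{cohen2003harmonic} rather than reprove it, since the delicate geometric counting is exactly what upgrades the trivial $\ell^1\hookrightarrow\ell^{1,\infty}$ heuristic to a sharp bound usable in interpolation. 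The remaining steps---verifying the exponent arithmetic, handling the low-frequency $\|g\|_{L^1}$ part of $\|g\|_{BV}$ separately from the seminorm $|g|_{BV}$, and tracking the dependence of the constant on $p,q,s,d$---are routine once the coefficient estimate and the sequence-space interpolation are in place.
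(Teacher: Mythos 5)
The paper itself contains no proof of this statement: it is imported verbatim, with attribution, as Theorem 1.5 of \cite{cohen2003harmonic}, so the only meaningful comparison is with the proof in that reference. Measured against that, your outline is faithful in structure: the argument of Cohen, Dahmen, Daubechies and DeVore does proceed by passing to wavelet coefficients, establishing a weak-$\ell^1$ bound for the coefficients of a $BV$ function (their hard, geometric result, proved by a coarea/level-set/packing argument broadly as you describe), and then deducing the interpolation inequality by a sequence-space, $K$-functional computation. Your decoding of the hypothesis ($\gamma>1$ or $\gamma<1-1/d$ is equivalent to $s>1$ or $s<1/p$) is also correct. Since you, like the paper, ultimately quote the weak-$\ell^1$ estimate from \cite{cohen2003harmonic}, your proposal and the paper rest on the same external source.

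Two corrections are needed, one of them substantive. First, the normalization in your key estimate is wrong: the weak-$\ell^1$ bound holds for coefficients taken with respect to \emph{BV-normalized} wavelets, i.e. $\|\nabla\psi_\lambda\|_{L^1}\asymp 1$, equivalently $L^{d/(d-1)}$-normalized (for $d=2$ this is precisely the usual $L^2$-normalization), and not $L^1$-normalized ones. As written, your sequence $(d_\lambda)$ carries the scaling of smoothness $0$ rather than smoothness $1$ in the $\ell^1$ column, and pushing it through the exponent arithmetic produces $t=(1-\theta)s$ instead of $t=(1-\theta)s+\theta$; the ``$+\theta$'' that you yourself identify as the BV-scaling only appears with the BV normalization. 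Second, and more importantly, the weak-$\ell^1$ estimate is genuinely false in dimension $d=1$: for $g=\mathbf{1}_{[0,1]}$ the BV-normalized coefficients of the $O(1)$ wavelets straddling each jump are of order one at \emph{every} scale $j$, so infinitely many coefficients exceed a fixed threshold. Consequently the route you sketch can only prove the inequality for $d\geq 2$. The statement as reproduced in the paper carries no dimension restriction, and the paper applies it with $d=1$ in both parts of the proof of Proposition~\ref{ParticIntd=1}; a complete proof along your lines must therefore either add the restriction $d\geq 2$ or supply a genuinely different argument in one dimension, and this caveat in fact extends to the paper's own use of the quoted theorem.
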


\begin{proof}[Proof of part a) of Proposition~\ref{ParticIntd=1}]
First, Theorem~\ref{thm:Cohen} with $s=-d/2-\beta$ and $p=\infty$ gives
\begin{equation*}
 \|g\|_{B^0_{q^*,q^*}}\leq C\, \|g\|_{B^{-d/2-\beta}_{\infty,\infty}}^{\frac{2}{d+2\beta+2}}\|g\|_{BV}^{\frac{d+2\beta}{d+2\beta+2}}
\end{equation*}
for any smooth enough $g$. It remains to show that the $L^q$-norm, $q\in[1,q^*]$, can be upper bounded by the $B^0_{q^*,q^*}$-norm. But that is indeed the case, due to the continuous embedding
\begin{equation}
B_{r,r}^0(\mathbb{R}^d)\hookrightarrow L^r(\mathbb{R}^d) \label{Embed1}
\end{equation}
for $r\in(1,2]$. Indeed, continuity of the embedding follows from Proposition 2 in Section 2.3.2 in~\cite{TriebelI}. It states that, for $0<q\leq\infty$, $0<p<\infty$ and $s\in\mathbb{R}$, the embedding
\begin{equation*}
B^s_{p,\min\{p,q\}}(\Rd)\hookrightarrow F^s_{p,q}(\Rd)
\end{equation*}
is continuous. Moreover, equation (2) in Section 2.3.5 in~\cite{TriebelI} states that
\begin{equation*}
F^{0}_{p,2}(\Rd)=L^p(\Rd)
\end{equation*}
for $p\in(1,\infty)$.
These two facts imply that
\begin{equation*}
B^0_{r,r}(\Rd)=B^0_{r,\min\{r,2\}}(\Rd)\hookrightarrow F_{r,2}^0(\Rd)=L^r(\Rd) \ \ \ \forall r\in(1,2],
\end{equation*}
which completes the proof of~\eqref{Embed1}. The extension to the $L^1$-risk follows by compact support.
\end{proof}

The proof of part b) of Proposition~\ref{ParticIntd=1} relies on the following result.

\begin{proposition}\label{new_d=1}
Let $g\in L^{\infty}\cap BV$ satisfy supp $g\subseteq[0,1]^d$, and let $q\in[2,3]$. Then for any $J\in\mathbb{N}$ we have
\begin{equation*}
\|g\|_{L^{q}}\leq C\,  J\, \|g\|_{B^0_{q,q}}+C\, 2^{-J/q}\|g\|_{L^{\infty}}^{1-1/q}\|g\|_{BV}^{1/q}
\end{equation*}
for a constant $C>0$ independent of $g$.
\end{proposition}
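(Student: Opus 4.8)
The plan is to split $g$ at the dyadic scale $2^{-J}$ and estimate the two pieces separately. Writing the wavelet decomposition $g=\sum_{j\geq 0}\Delta_j g$ with scale blocks $\Delta_j g:=\sum_{k,e}\langle\psi_{j,k,e},g\rangle\,\psi_{j,k,e}$, I set the low-frequency part $P_J g:=\sum_{j<J}\Delta_j g$ and the high-frequency part $Q_J g:=g-P_J g=\sum_{j\geq J}\Delta_j g$. By the triangle inequality $\|g\|_{L^q}\leq\|P_J g\|_{L^q}+\|Q_J g\|_{L^q}$, and the goal is to bound the first summand by $C\,J\,\|g\|_{B^0_{q,q}}$ and the second by $C\,2^{-J/q}\,\|g\|_{L^\infty}^{1-1/q}\|g\|_{BV}^{1/q}$.

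For the low-frequency term the key ingredient is the single-scale estimate
\begin{equation*}
\|\Delta_j g\|_{L^q}\leq C\,2^{jd(1/2-1/q)}\Big(\sum_{k,e}|\langle\psi_{j,k,e},g\rangle|^q\Big)^{1/q},
\end{equation*}
which holds because, at a fixed scale $j$, the compactly supported wavelets $\psi_{j,k,e}$ have uniformly bounded overlap, so pointwise $|\Delta_j g|^q\leq C\sum_{k,e}|\langle\psi_{j,k,e},g\rangle|^q|\psi_{j,k,e}|^q$, after which one integrates using $\|\psi_{j,k,e}\|_{L^q}\asymp 2^{jd(1/2-1/q)}$. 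The right-hand side is exactly the scale-$j$ block $b_j$ of the Besov norm $\|g\|_{B^0_{q,q}}=(\sum_j b_j^q)^{1/q}$. Summing over the $J$ coarse scales and applying H\"older's inequality in the index $j$ then yields
\begin{equation*}
\|P_J g\|_{L^q}\leq\sum_{j<J}\|\Delta_j g\|_{L^q}\leq C\sum_{j<J}b_j\leq C\,J^{1-1/q}\Big(\sum_{j<J}b_j^q\Big)^{1/q}\leq C\,J\,\|g\|_{B^0_{q,q}},
\end{equation*}
which is the claimed bound (in fact with the sharper prefactor $J^{1-1/q}$).

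For the high-frequency term I interpolate between $L^1$ and $L^\infty$: by H\"older's inequality $\|Q_J g\|_{L^q}\leq\|Q_J g\|_{L^\infty}^{1-1/q}\|Q_J g\|_{L^1}^{1/q}$. The $L^\infty$ factor is harmless, since the wavelet projection $P_J$ is bounded on $L^\infty$ uniformly in $J$ (its reproducing kernel has uniformly bounded Lebesgue constant for compactly supported wavelets), so $\|Q_J g\|_{L^\infty}\leq\|g\|_{L^\infty}+\|P_J g\|_{L^\infty}\leq C\|g\|_{L^\infty}$. The decisive estimate is the $L^1$ factor, which is where the $BV$ regularity enters: using the continuous embedding $BV\hookrightarrow B^1_{1,\infty}$, the scale-$j$ coefficients satisfy $\sum_{k,e}|\langle\psi_{j,k,e},g\rangle|\leq C\,2^{j(d/2-1)}|g|_{BV}$, whence $\|\Delta_j g\|_{L^1}\leq C\,2^{-jd/2}\sum_{k,e}|\langle\psi_{j,k,e},g\rangle|\leq C\,2^{-j}|g|_{BV}$ and therefore
\begin{equation*}
\|Q_J g\|_{L^1}\leq\sum_{j\geq J}\|\Delta_j g\|_{L^1}\leq C\sum_{j\geq J}2^{-j}|g|_{BV}\leq C\,2^{-J}\,\|g\|_{BV}.
\end{equation*}
Combining the two factors gives $\|Q_J g\|_{L^q}\leq C\,2^{-J/q}\|g\|_{L^\infty}^{1-1/q}\|g\|_{BV}^{1/q}$, and adding the low- and high-frequency estimates proves the proposition.

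The main obstacle is precisely the high-frequency $L^1$ bound, namely the fact that $BV$ functions are approximated at rate $2^{-J}$ in $L^1$ by their truncated wavelet expansion; this is the step where the bounded-variation assumption is genuinely used (via $BV\hookrightarrow B^1_{1,\infty}$ and the resulting control of the $\ell^1$ norm of the wavelet coefficients), and it is what forces the second term in the statement. I note that the restriction $q\in[2,3]$ plays no essential role in the argument above — the mechanism works verbatim for every $q\in[1,\infty)$ — and merely reflects the range $q\in[2,q^*]$ that is relevant for the application to part b) of Proposition~\ref{ParticIntd=1}.
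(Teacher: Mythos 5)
Your proof is correct, and while it uses the same coarse/fine splitting at scale $J$ as the paper, it obtains both key estimates by genuinely different means. First, you prove the single-scale bound $\|\Delta_j g\|_{L^q}\leq C\,2^{jd(1/2-1/q)}\big(\sum_{k,e}|c_{j,k,e}|^q\big)^{1/q}$ by a pointwise bounded-overlap argument; the paper proves this as Lemma~\ref{lemma:wavelets} via Riesz--Thorin interpolation between $q=2$ (orthonormality) and $q=3$ (an external lemma from~\cite{mTV}). Your route is more elementary and valid for every $q\in[1,\infty)$, which is why your closing remark that the restriction $q\in[2,3]$ is inessential is justified; in the paper that restriction is an artifact of the interpolation endpoints. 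Second, for the high-frequency part the paper stays entirely in coefficient space: it interpolates $\sum_{k,e}|c_{j,k,e}|^q\leq\big(\max_{k,e}|c_{j,k,e}|\big)^{q-1}\sum_{k,e}|c_{j,k,e}|$, inserts the $L^{\infty}$- and $BV$-coefficient bounds, and then applies the single-scale estimate and a geometric sum; you perform the same $(1-1/q)$ versus $1/q$ interpolation at the level of function norms, via H\"older between $L^{\infty}$ and $L^1$. This buys a shorter computation but costs an extra ingredient: the uniform $L^{\infty}$-boundedness of the projections $P_J$ (finite Lebesgue constant of the scaling-function kernel), which is standard for compactly supported Daubechies wavelets but is a nontrivial fact that the paper's coefficient-space version avoids entirely. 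Both ingredients you invoke are correct, your $L^1$ estimate $\|Q_Jg\|_{L^1}\leq C\,2^{-J}\|g\|_{BV}$ matches the paper's use of the embedding $BV\subset B^1_{1,\infty}$, and your prefactor $J^{1-1/q}$ from H\"older in $j$ is in fact slightly sharper than the paper's $J$, which comes from bounding the sum over $j\leq J$ by $J$ times the maximal scale block.
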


The proof of Proposition~\ref{new_d=1} uses the following lemma.

\begin{lemma}\label{lemma:wavelets}
Let $\{\psi_{j,k,e}\, |\, (j,k,e)\in\Omega\}$ denote a basis of compactly supported wavelets in $L^2(\Rd)$. For any $q\in[2,3]$ there is a constant $C_{\psi,q}$ such that
\begin{equation*}
\int_{\Rd}\bigg|\sum_{(k,e)\in P_j^d\times E_j}c_{j,k,e}\psi_{j,k,e}(x)\bigg|^q\, dx\leq C_{\psi,q}\, 2^{jqd(1/2-1/q)}\sum_{(k,e)\in P_j^d\times E_j}|c_{j,k,e}|^q
\end{equation*}
for any $j\in\mathbb{N}$ and any coefficients $\{c_{j,k,e}\}$, where
\begin{equation*}
P_j^d:=\{k\in \mathbb{Z}^d\, |\, (j,k,e)\in\Omega, \ \ \textup{ supp }\psi_{j,k,e}\cap (0,1)^d\neq \emptyset\}.
\end{equation*}
\end{lemma}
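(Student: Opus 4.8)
The plan is to prove this single-scale estimate by combining the finite-overlap property of compactly supported wavelets with the elementary scaling of a single wavelet's $L^q$-norm. The only structural fact I need is that, after $L^2$-normalization, one can write $\psi_{j,k,e}(x)=2^{jd/2}\Psi_e(2^jx-k)$ for finitely many compactly supported functions $\Psi_e$, $e\in E_j$, each supported in a fixed cube of side length of order $12D+1$ (recall Section~\ref{sect:Notation}). Since the support of $\psi_{j,k,e}$ is then contained in the cube $2^{-j}(\mathrm{supp}\,\Psi_e+k)$, for every fixed $x\in\Rd$ the number of indices $(k,e)\in P_j^d\times E_j$ with $\psi_{j,k,e}(x)\neq 0$ is bounded by a constant $N=N(\psi,d)$ that is independent of $j$; this uniformity is the crux of the argument.

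First I would exploit this finite overlap through the convexity inequality $|\sum_{i=1}^N a_i|^q\leq N^{q-1}\sum_{i=1}^N|a_i|^q$, valid for $q\geq 1$, applied pointwise to the (at most $N$) nonzero terms at each $x$. This yields the pointwise bound
\[
\Big|\sum_{(k,e)}c_{j,k,e}\psi_{j,k,e}(x)\Big|^q\leq N^{q-1}\sum_{(k,e)}|c_{j,k,e}|^q\,|\psi_{j,k,e}(x)|^q .
\]
Integrating over $\Rd$ and interchanging the (finite, once $x$ is fixed) sum and the integral reduces the problem to estimating $\int_{\Rd}|\psi_{j,k,e}(x)|^q\,dx$ for a single wavelet.

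For that integral, the change of variables $y=2^jx-k$ together with the normalization above gives
\[
\int_{\Rd}|\psi_{j,k,e}(x)|^q\,dx=2^{jdq/2}\,2^{-jd}\int_{\Rd}|\Psi_e(y)|^q\,dy=2^{jd(q/2-1)}\,\|\Psi_e\|_{L^q}^q .
\]
Since $\Psi_e\in L^\infty$ has compact support it lies in $L^q$, so $\max_e\|\Psi_e\|_{L^q}^q<\infty$, and noting the identity $jd(q/2-1)=jqd(1/2-1/q)$ I would combine the three displays to obtain the claim with $C_{\psi,q}=N^{q-1}\max_{e}\|\Psi_e\|_{L^q}^q$. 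I expect no genuine obstacle here: the argument in fact works for every $q\geq 1$, the stated range $q\in[2,3]$ being merely the range in which the lemma is later invoked. The one point requiring care is the uniform-in-$j$ overlap bound $N$, which relies precisely on the compact support of the wavelets and the self-similar structure across scales; everything else is a routine scaling computation.
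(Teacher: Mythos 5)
Your proof is correct, but it follows a genuinely different route from the paper's. The paper establishes the two endpoint cases separately --- $q=2$ via orthonormality of the wavelet basis (with constant $1$, since the exponent $2^{jqd(1/2-1/q)}$ vanishes there) and $q=3$ by invoking Lemma 2 of \cite{mTV} --- and then interpolates with the Riesz--Thorin theorem applied to the synthesis operator $A_j:\ell^q(P_j^d\times E_j)\rightarrow L^q(\Rd)$ to cover all $q\in[2,3]$. You instead give a direct, self-contained argument: the uniform-in-$j$ bounded-overlap property of the single-scale system (at most $N=N(\psi,d)$ wavelets are nonzero at any point $x$, since $\psi_{j,k,e}(x)\neq 0$ forces $k\in 2^jx-\textup{supp}\,\Psi_e$, which contains boundedly many lattice points), the pointwise convexity bound $\big|\sum_{i=1}^N a_i\big|^q\leq N^{q-1}\sum_{i=1}^N|a_i|^q$, and the exact scaling identity $\|\psi_{j,k,e}\|_{L^q}^q=2^{jd(q/2-1)}\|\Psi_e\|_{L^q}^q$ with $jd(q/2-1)=jqd(1/2-1/q)$; all steps check out, including the sum--integral interchange (the index set is finite) and the finiteness of $\max_e\|\Psi_e\|_{L^q}$ (Daubechies wavelets are bounded with compact support). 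What each approach buys: yours is more elementary, avoids both the interpolation machinery and the external citation for the $q=3$ endpoint, and proves the stronger statement valid for every $q\in[1,\infty)$ rather than only $q\in[2,3]$; the paper's route exploits orthonormality to get the sharp constant at $q=2$ and, by interpolation, constants of the form $M_2^{1-\theta}M_3^{\theta}$ that are typically better than your $N^{q-1}\max_e\|\Psi_e\|_{L^q}^q$, though for the purposes of Proposition~\ref{new_d=1} only the existence of some constant $C_{\psi,q}$ matters.
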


\begin{proof}[Proof of Lemma~\ref{lemma:wavelets}]
We prove the lemma by showing the extreme cases $q=2$ and $q=3$, and then applying the Riesz-Thorin interpolation theorem (see e.g.~\cite{stein2016introduction}) to the bounded operator
\begin{align*}
A_j:\, &\ell^q(P_j^d\times E_j)\rightarrow L^q(\Rd)
\\
&\{c_{j,k,e}\}_{(k,e)\in P_j^d\times E_j}\mapsto \sum_{(k,e)\in P_j^d\times E_j}c_{j,k,e}\psi_{j,k,e},
\end{align*}
which gives the claim for all $q\in[2,3]$. 
The claim for $q=2$ follows by the orthonormality of the wavelet basis. 
For $q=3$, the claim follows with the same argument as Lemma 2 in~\cite{mTV}: the only difference is that the functions there are defined on the torus $\mathbb{T}^d$, and here on the cube $[0,1]^d$. This completes the proof.
\end{proof}

\begin{proof}[Proof of Proposition~\ref{new_d=1}]
Let $\{\psi_{j,k,e}\}$ be a basis of compactly supported wavelets. Writing $g$ formally as its wavelet series we have for any $q\in[2,3]$
\begin{equation}
\|g\|_{L^q}=\bigg\|\sum_{j\in\mathbb{N}}\sum_{k,e}c_{j,k,e}\psi_{j,k,e}\bigg\|_{L^q}\leq \bigg\|\sum_{j\leq J}\sum_{k,e}c_{j,k,e}\psi_{j,k,e}\bigg\|_{L^q}+\bigg\|\sum_{j>J}\sum_{k,e}c_{j,k,e}\psi_{j,k,e}\bigg\|_{L^q}\label{aux:d1_first}
\end{equation}
for any $J\in\mathbb{N}$. Since supp $g\subseteq[0,1]^d$, the sums are over $(k,e)\in P_j^d\times E_j$. Using Lemma~\ref{lemma:wavelets}, the first term can be bounded as
\begin{align*}
\bigg\|\sum_{j\leq J}\sum_{k,e}c_{j,k,e}\psi_{j,k,e}\bigg\|_{L^q}&\leq \sum_{j\leq J}\bigg(C_{\psi,q}2^{jqd(1/2-1/q)}\sum_{(k,e)}|c_{j,k,e}|^q\bigg)^{1/q}
\\
&\leq C_{\psi,q}^{1/q}\, J\, \bigg(\max_{j\leq J}\, 2^{jqd(1/2-1/q)}\sum_{(k,e)}|c_{j,k,e}|^q\bigg)^{1/q}
\\
&\leq  C_{\psi,q}^{1/q}\, J\,\|g\|_{B^0_{q,q}},
\end{align*}
which gives the first term of the claim. For the second term, we use that $g\in L^{\infty}$ and $g\in BV$, which means that the wavelet coefficients of $g$ satisfy the bounds
\begin{equation*}
\max_{(k,e)\in P_j^d\times E_j}|c_{j,k,e}|\leq 2^{-jd/2}\, \|g\|_{L^{\infty}} \ \ \textup{ and }\ \   \sum_{(k,e)\in P_j^d\times E_j}|c_{j,k,e}|\leq 2^{j(d/2-1)}\, \|g\|_{BV},
\end{equation*}
for any $j\in\mathbb{N}$, where the first inequality follows from the compact support of the wavelets and H\"older's inequality, and the second follows from the embedding $BV\subset B^1_{1,\infty}$. 
Using Lemma~\ref{lemma:wavelets} and these bounds, the second term in~\eqref{aux:d1_first} can be bounded as
\begin{align*}
\bigg\|\sum_{j>J}\sum_{k,e}c_{j,k,e}\psi_{j,k,e}\bigg\|_{L^q}&\leq\sum_{j>J} \bigg(C_{\psi,q}2^{jqd(1/2-1/q)}\sum_{(k,e)\in P_j^d\times E_j}|c_{j,k,e}|^q\bigg)^{1/q}
\\
&\leq C_{\psi,q}^{1/q}\sum_{j>J} \bigg(2^{jqd(1/2-1/q)}\, 2^{-jd(q-1)/2}\, \|g\|_{L^{\infty}}^{q-1}2^{j(d/2-1)}\|g\|_{BV}\bigg)^{1/q}
\\
&\leq C_{\psi,q}^{1/q}\, \|g\|_{L^{\infty}}^{1-1/q}\, \|g\|_{BV}^{1/q}\sum_{j>J} 2^{-j/q},
\end{align*}
which gives the claim.
\end{proof}

\begin{proof}[Proof of part b) of Proposition~\ref{ParticIntd=1}]
Let $q^*:=1+2/(d+2\beta)$ and assume that $q^*>2$. Notice that $q^*\leq 3$ for $d\in\mathbb{N}$ and $\beta\geq 0$. The claim follows from Theorem~\ref{thm:Cohen} with $s=-d/2-\beta$ and $p=\infty$, which gives a bound on the $B^0_{q^*,q^*}$ norm. The $L^q$-norm, $q\in[1,q^*]$, can be upper bounded by the $L^{q^*}$-norm, which itself can be upper bounded by the $B^0_{q^*,q^*}$ norm using Proposition~\ref{new_d=1} below. Choosing $J=\lceil q^*\log n\rceil$ yields the claim.
\end{proof}

\begin{proposition}\label{aux:Convolution}
In the setting of Section~\ref{sect:ExampConv} we have
\begin{align*}
& T^*u_{j,k,e}=\kappa_j\, \psi_{j,k,e} \ \ \textup{ where } \kappa_j=2^{-j\beta},
\\
& c_1\leq \|u_{j,k,e}\|_{L^2}\leq c_2 \ \ \forall (j,k,e)\in\Omega,
\end{align*}
where we can choose $c_1=\min_{e\in \{0,1\}^d}\|(-\Delta)^{\beta/2}\psi_{0,0,e}\|_{L^2}$ and $c_2=\max_{e\in\{0,1\}^d}\|\psi_{0,0,e}\|_{H^{\beta}}$
\end{proposition}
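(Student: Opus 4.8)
The plan is to carry out the whole computation on the Fourier side, since both the convolution operator and the vaguelettes in~\eqref{IP:DeconvDiction} are defined through $\mathcal{F}$. First I would record the Fourier transform of the vaguelettes. Writing $u_{j,k,e}(x)=2^{j(d/2-\beta)}v_{j,e}(2^jx-k)$ with $\mathcal{F}[v_{j,e}](\eta)=\mathcal{F}[\psi_{0,0,e}](\eta)/\mathcal{F}[K](-2^j\eta)$, the scaling and modulation rules of the Fourier transform give
\begin{equation*}
\mathcal{F}[u_{j,k,e}](\xi)=2^{-j(d/2+\beta)}\,e^{-i2^{-j}k\cdot\xi}\,\frac{\mathcal{F}[\psi_{0,0,e}](2^{-j}\xi)}{\mathcal{F}[K](-\xi)}.
\end{equation*}
For comparison, the same rules applied to $\psi_{j,k,e}(x)=2^{jd/2}\psi_{0,0,e}(2^jx-k)$ yield $\mathcal{F}[\psi_{j,k,e}](\xi)=2^{-jd/2}e^{-i2^{-j}k\cdot\xi}\mathcal{F}[\psi_{0,0,e}](2^{-j}\xi)$.

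Next I would verify the intertwining relation. Since $T$ is convolution with $K$, its adjoint is convolution with $\overline{K(-\,\cdot\,)}$, so $\mathcal{F}[T^*h](\xi)=\overline{\mathcal{F}[K](\xi)}\,\mathcal{F}[h](\xi)$. For a real kernel $K$ one has $\overline{\mathcal{F}[K](\xi)}=\mathcal{F}[K](-\xi)$, so multiplying the displayed expression for $\mathcal{F}[u_{j,k,e}]$ by $\overline{\mathcal{F}[K](\xi)}$ cancels the denominator exactly and produces $2^{-j\beta}\mathcal{F}[\psi_{j,k,e}](\xi)$. Inverting the Fourier transform gives $T^*u_{j,k,e}=2^{-j\beta}\psi_{j,k,e}$, which is the first claim with $\kappa_j=2^{-j\beta}$. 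This step also explains the choice of $\mathcal{F}[K](-2^j\,\cdot\,)$ in the denominator of~\eqref{IP:DeconvDiction}: it is designed precisely so that the factor coming from $T^*$ is annihilated.

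For the norm bounds I would use Plancherel together with the displayed Fourier transform. Since the modulation factor has modulus one, $\|u_{j,k,e}\|_{L^2}^2$ is, up to the Plancherel constant, equal to
\begin{equation*}
2^{-2j(d/2+\beta)}\int_{\Rd}\frac{|\mathcal{F}[\psi_{0,0,e}](2^{-j}\xi)|^2}{|\mathcal{F}[K](-\xi)|^2}\,d\xi
=2^{-2j\beta}\int_{\Rd}\frac{|\mathcal{F}[\psi_{0,0,e}](\eta)|^2}{|\mathcal{F}[K](-2^j\eta)|^2}\,d\eta,
\end{equation*}
after the substitution $\eta=2^{-j}\xi$. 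Inserting the two-sided bound~\eqref{IP:kernelDecay}, which gives $|\mathcal{F}[K](-2^j\eta)|^{-2}\asymp(1+2^{2j}|\eta|^2)^{\beta}$, reduces everything to controlling $2^{-2j\beta}\int_{\Rd}|\mathcal{F}[\psi_{0,0,e}](\eta)|^2(1+2^{2j}|\eta|^2)^{\beta}\,d\eta$ uniformly in $j\geq0$.

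The crux, and the only point requiring care, is to make these bounds independent of the scale $j$. For the upper bound I would use $1+2^{2j}|\eta|^2\leq 2^{2j}(1+|\eta|^2)$ (valid for $j\geq0$), so that $(1+2^{2j}|\eta|^2)^{\beta}\leq 2^{2j\beta}(1+|\eta|^2)^{\beta}$; the prefactor $2^{-2j\beta}$ then cancels and the remaining integral is the squared inhomogeneous Sobolev norm $\|\psi_{0,0,e}\|_{H^{\beta}}^2$, which is finite because the wavelet smoothness $\min\{R,D\}>d/2+\beta$ guarantees $\psi_{0,0,e}\in H^{\beta}$. For the lower bound I would instead use $(1+2^{2j}|\eta|^2)^{\beta}\geq 2^{2j\beta}|\eta|^{2\beta}$; again $2^{-2j\beta}$ cancels, and the remaining integral equals $\int_{\Rd}|\eta|^{2\beta}|\mathcal{F}[\psi_{0,0,e}](\eta)|^2\,d\eta=\|(-\Delta)^{\beta/2}\psi_{0,0,e}\|_{L^2}^2$. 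Taking the minimum (respectively the maximum) over the finitely many types $e\in\{0,1\}^d$, and absorbing the constants $a_1,a_2$ from~\eqref{IP:kernelDecay} and the Plancherel normalization, yields the stated constants $c_1$ and $c_2$ uniformly over all $(j,k,e)\in\Omega$. I expect this uniform-in-$j$ matching of the dyadic weight $(1+2^{2j}|\eta|^2)^{\beta}$ against the prefactor $2^{-2j\beta}$ to be the main obstacle; once the two elementary inequalities above are in place, the homogeneous and inhomogeneous Sobolev norms appear automatically and the proof closes.
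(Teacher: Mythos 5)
Your proposal is correct and follows essentially the same route as the paper's proof: compute $\mathcal{F}[u_{j,k,e}]$, observe that $T^*$ acts in Fourier domain by multiplication with $\mathcal{F}[K](-\cdot)$ so the denominator cancels exactly, then bound $\|u_{j,k,e}\|_{L^2}$ via Plancherel, the kernel decay~\eqref{IP:kernelDecay}, and the two elementary inequalities $(1+2^{2j}|\eta|^2)^{\beta}\geq 2^{2j\beta}|\eta|^{2\beta}$ and $(1+2^{2j}|\eta|^2)^{\beta}\leq 2^{2j\beta}(1+|\eta|^2)^{\beta}$, yielding the homogeneous and inhomogeneous Sobolev norms respectively. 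The only (harmless) additions are your explicit remark that $K$ should be real so that $\overline{\mathcal{F}[K](\xi)}=\mathcal{F}[K](-\xi)$, and your explicit note that the constants $a_1,a_2$ and the Plancherel normalization are absorbed, both of which the paper handles implicitly via $\asymp$.
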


\begin{proof}
Notice that the Fourier transform of the elements $u_{j,k,e}$ is given by
\begin{equation}
\mathcal{F}[u_{j,k,e}](\xi)=2^{-jd/2-j\beta}\, e^{-i\xi\cdot k 2^{-j}}\frac{\mathcal{F}[\psi_{0,0,e}](2^{-j}\xi)}{\mathcal{F}[K](-\xi)}.
\end{equation}
The first claim of the proposition follows trivially by construction of the $u_{j,k,e}$: we essentially use that $T^*$ acts by convolution with $K(-\cdot)$, which in Fourier domain is the product with $\mathcal{F}[K](-\cdot)$. For the bounds in the $L^2$ norm, we use Plancherel's theorem, i.e.
\begin{align}
\|u_{j,k,e}\|_{L^2}^2=\|\mathcal{F}[u_{j,k,e}]\|_{L^2}^2&=2^{-jd-2j\beta}\int_{\mathbb{R}^d}\bigg|\frac{\mathcal{F}[\psi_{0,0,e}](2^{-j}\xi)}{ \mathcal{F}[K](-\xi)}\bigg|^2\, \frac{d\xi}{(2\pi)^d}\nonumber
\\
&\asymp 2^{-jd-2j\beta}\int_{\mathbb{R}^d}\big(1+|\xi|^2\big)^{\beta}\big|\mathcal{F}[\psi_{0,0,e}](2^{-j}\xi)\big|^2\, d\xi\nonumber
\\
&= 2^{-2j\beta}\int_{\mathbb{R}^d}\big(1+|2^j\xi|^2\big)^{\beta}\big|\mathcal{F}[\psi_{0,0,e}](\xi)\big|^2\, d\xi,\label{Aux:Plancher}
\end{align}
where in the second line we used the bounds~\eqref{IP:kernelDecay} on the Fourier transform of the kernel $K$. The expression in the right-hand side can now be easily bounded from below as
\begin{align*}
2^{-2j\beta}\int_{\mathbb{R}^d}\big(1+|2^j\xi|^2\big)^{\beta}\big|\mathcal{F}[\psi_{0,0,e}](\xi)\big|^2\, d\xi&\geq 2^{-2j\beta}\int_{\mathbb{R}^d}|2^j\xi|^{2\beta}\big|\mathcal{F}[\psi_{0,0,e}](\xi)\big|^2\, d\xi
\\
&=\big\||\xi|^{\beta}\mathcal{F}[\psi_{0,0,e}]\big\|_{L^2}^2=\|(-\Delta)^{\beta/2}\psi_{0,0,e}\|_{L^2}^2,
\end{align*}
again by Plancherel's theorem. On the other hand, the right-hand side of~\eqref{Aux:Plancher} can be upper-bounded as
\begin{align*}
2^{-2j\beta}\int_{\mathbb{R}^d}\big(1+|2^j\xi|^2\big)^{\beta}\big|\mathcal{F}[\psi_{0,0,e}](\xi)\big|^2\, d\xi& \leq 2^{-2j\beta}\int_{\mathbb{R}^d}\big(2^{2j}+|2^j\xi|^2\big)^{\beta}\big|\mathcal{F}[\psi_{0,0,e}](\xi)\big|^2\, d\xi
\\
&=\big\|\big(1+|\xi|^2\big)^{\beta/2}\mathcal{F}[\psi_{0,0,e}]\big\|_{L^2}^2=\|\psi_{0,0,e}\|_{H^{\beta}}^2.
\end{align*}
This yields the claim.
\end{proof}

\section*{Funding}

This work was supported by the Deutsche Forschungsgemeinschaft [RTG 2088-B2 to M.A., CRC 755-A4 to A.M.].

\section*{Acknowledgment}

The authors thank Dr.~Housen Li and Dr.~Frank Werner for helpful discussions.


%


\end{document}